\documentclass[a4paper,11pt]{article}

\usepackage{authblk}
\usepackage{hyperref}

\title{Estimation of the Hurst and the stability indices\\
 of a $H$-self-similar stable process}
\author{Thi To Nhu DANG\footnote{The university of Danang, University of Economics, 71 Ngu Hanh Son, Danang, Vietnam. E-mail: nhudtt@due.edu.vn},
Jacques ISTAS\footnote{Laboratoire Jean Kuntzmann, Universit\'e de Grenoble Alpes et CNRS, F-38000 Grenoble, France.\\ E-mail: jacques.istas@univ-grenoble-alpes.fr }}

\usepackage[utf8x]{inputenc}

\usepackage{lmodern}
\usepackage{fancyhdr}
\usepackage[top=2cm, bottom=3cm, left=2cm, right=1.5cm]{geometry}
\usepackage{setspace}
\usepackage{textcomp}
\usepackage{graphicx}
\usepackage{subfig}
\usepackage[nottoc, notlof, notlot]{tocbibind}
\usepackage{url}
\usepackage{color}
\usepackage{bbm}

\usepackage{amsthm}
\usepackage{amsmath}
\usepackage{amssymb}
\usepackage{mathrsfs}
\usepackage{dsfont}

\usepackage{listings}				
{\theoremstyle{definition}
\newtheorem{defn}{Definition}[section]}

\newtheorem{thm}{Theorem}[section]
\newtheorem{remark}{Remark}[section]
\newtheorem{prop}{Proposition}[section]

\newtheorem{lem}{Lemma}[section]

\setcounter{secnumdepth}{5}
\setcounter{tocdepth}{5}


\begin{document}

\maketitle
\begin{abstract}
In this paper we estimate both the Hurst and the stability indices of a $H$-self-similar stable process.
More precisely, let $X$ be a $H$-sssi (self-similar stationary increments) symmetric $\alpha$-stable process.
The process $X$ is observed at points $\frac{k}{n}$, $k=0,\ldots,n$. 
Our estimate is based on $\beta$-negative power variations with $-\frac{1}{2}<\beta<0$. 
We obtain consistent estimators, with rate of convergence, for several classical $H$-sssi $\alpha$-stable processes 
(fractional Brownian motion, well-balanced linear fractional stable motion, Takenaka's process, L\'evy motion).
Moreover, we obtain asymptotic normality of our estimators for fractional Brownian motion and L\'evy motion.
\end{abstract}
{\bf{Keywords:}} H-sssi processes; stable processes; self-similarity parameter estimator; stability parameter estimator.
\tableofcontents
\section{Introduction}

Self-similar processes play an important role in probability because of their connection to limit theorems 
and they are widely used to model natural phenomena. 
For instance, persistent phenomena in internet traffic, hydrology, geophysics or financial markets, e.g., \cite{Cohen2013}, \cite{McCulloch1996}, \cite{Taqqu1994}, are known to be self-similar.  
Stable processes have attracted growing interest in recent years: 
data with ''heavy tails'' have been collected in fields as diverse as economics, telecommunications, hydrology and physics of
condensed matter, which suggests using non-Gaussian stable processes as possible models, e.g., \cite{Taqqu1994}.
Self-similar $\alpha$-stable processes have been proposed to model some natural phenomena with heavy tails, as in \cite{Taqqu1994} and references therein. 

The estimation of various indices of $H-$sssi $\alpha-$stable processes has been a problem studied since several decades ago and, even nowadays, it continues to be a challenge. 
In the case of fractional Brownian motion, the estimation of the self-similarity index $H$ has attracted attention to many authors and many methods have been proposed for solving this problem. Among these, one can mention the quadratic variation method (see e.g. \cite{Benassi2000}, \cite{Benassi1998}, \cite{Cohen2013}, \cite{Jacques1997}), the $p$-variation method (see e.g. \cite{Coeurjolly2001}, \cite{Nourdin2010}), the wavelet coefficients method (see e.g. \cite{Delbeke1999}, \cite{Bardet2010}, \cite{Lacaux2007}), the log-variation method (see e.g. \cite{Cohen2013}, \cite{Jacques2012ESAIM}). 
Other references, like the works of J. Istas, recommend the use of complex variations for estimating the self-similarity index $H$ of $H-$sssi processes, but not for estimating $\alpha$, (see e.g. \cite{Jacques2012}). For linear fractional stable motions, strongly consistent estimators of the self-similarity index $H$, based on the discrete wavelet transform of the processes, have been proposed without requirement that $\alpha$ to be known, as in \cite{Taqqu1999},  \cite{Pipiras2007}, \cite{Taqqu2002}, \cite{Taqqu2005}. Thus, regarding the estimation of the stability index $\alpha$, in \cite{Ayache2013}, the authors presented a wavelet estimator for linear fractional stable motions assuming that $H$ is known. Recently, the corresponding estimation problem of the stability function and the localisability function for a class of multistable processes was considered in the discussion paper of R. Le Gu\'evel, see \cite{Guevel2013}, based on some conditions that involve the consistency of the estimators. For linear multifractional stable motions, in \cite{Ayache2015}, the authors presented strongly consistent estimators of the localisability function $H(.)$ and the stability index $\alpha$ using wavelet coefficients when $\alpha\in(1,2)$ and $H(.)$ is a H\"older function smooth enough,
with values in a compact subinterval $[\underline{H}, \overline{H}]$ of $(1/\alpha, 1)$. 

The aim of this work is to construct consistent estimators of the self-similar index $H$ and the stable index $\alpha$ of $H$-sssi, $S\alpha S$-stable processes using a new framework. In the view of the fact that a stable random variable has a density function, $\beta-$ negative power variations have expectations and covariances for $-1/2<\beta<0$. Our estimates are thus based on these variations. This new approach provides estimators of $H$ and $\alpha$ without assumptions on the existence moments of the underlying processes. It also allows us to give an estimator for the self-similarity parameter $H$ without assumption on $\alpha$ and vice versa, we can estimate the stability index $\alpha$ without assumption on $H$. In other words, using $\beta-$ negative power variations ($-1/2<\beta<0$), one can obtain the estimators of $H$ and $\alpha$ separately. 
We prove the consistency and rates of convergence of the proposed estimators for $H$ and $\alpha$ for the underlying processes under an assumption on the series of covariances of $\beta$-negative power variations ($-1/2< \beta<0$). Then obtained results were illustrated by some classical examples: fractional Brownian motions, $S\alpha S$-stable Lévy motions, well-balanced linear fractional stable motions and Takenaka’s processes. We then show that the asymptotic normality of our estimates can be  ascertained for the proposed estimators when the underlying process is a fractional Brownian motion or an $S\alpha S$-stable Lévy motion.

The remainder part of this article is organized as follows: in the next section, we present the setting, the assumption and main results 
to construct the estimators of $H$ and $\alpha$. 
In Section \ref{section.3}, some classical examples for the obtained results in Section \ref{section.2} are given: fractional Brownian motions,
$S\alpha S-$stable L\'evy motions,
well-balanced linear fractional stable motions, Takenaka's processes. In this Section, we also show the central limit theorem for the cases of  the fractional Brownian motion and the $S\alpha S-$stable L\'evy motion. 
Finally, in Section \ref{section.4}, we gather all the proofs of the main results and of the illustrated examples: Subsection \ref{subsection.4.1} contains auxiliary results on negative power variations which play an important role in the proofs in Subsection \ref{subsection.4.2} of the main results and in the proofs in Subsection \ref{subsection.4.3} of the results of four examples.
\section{Main results}\label{section.2}
Let us recall the definition of a $H-$sssi process and an $\alpha-$ stable process (see e.g., \cite{Taqqu1994}): 
A real-valued process $X$
\begin{itemize} 
\item is {\it{H-self-similar (H-ss)}} if for all $a>0$,
$
\{X(at), t\in \mathbb{R}\}\stackrel{(d)}{=}a^H\{X(t), t\in \mathbb{R}\},
$
\item has {\it {stationary increments (si)}} if, for all $s\in \mathbb{R}$,
$
\{X(t+s)-X(s), t\in \mathbb{R}\}\stackrel{(d)}{=}\{X(t)-X(0), t\in \mathbb{R}\}
$
\end{itemize}
where $\stackrel{(d)}{=}$ stands for equality of finite dimensional distributions.
A random variable $X$ is said to have a symmetric {\it{$\alpha$-stable distribution}} ($S\alpha S$) if there are parameters $\alpha\in (0,2]$ and $\sigma>0$ such that its characteristic function has the form:
 $$ \mathbb{E} e^{i\theta X}=
   \exp\left(-\sigma^\alpha \mid \theta\mid^\alpha\right).
  $$
 When $\sigma=1$, a $S\alpha S$ is said to be standard. Let $X$ be a $H$-sssi, $S\alpha S$ random process with $0<\alpha\leq 2$.\\
Let $L\geq 1, K\geq 1$ be fixed integers, $a=(a_0,\ldots,a_K)$ be a finite sequence with exactly $L$ vanishing first moments, that is for all $q\in   \{0,\ldots, L\}$, one has
\begin{align}\label{eq.A}
\sum\limits_{k=0}^K k^qa_k&=0,
\sum\limits_{k=0}^K k^{L+1}a_k\neq 0
\end{align}
with convention $0^0=1$. For example, here we can choose $K=L+1$ and 
\begin{align}\label{eq.B}
a_k=(-1)^{L+1-k}\frac{(L+1)!}{k!(L+1-k)!}.
\end{align}
The increments of $X$ with respect to the sequence $a$ are defined by
\begin{equation}\label{eq.8.0}
\triangle_{p,n}X=\sum\limits_{k=0}^Ka_kX(\frac{k+p}{n}).
\end{equation}
We define now an estimator of $H$. Let $\beta\in \mathbb{R},-\frac{1}{2}<\beta<0$, we set 
\begin{align}
 V_n({\beta})&=\frac{1}{n-K+1}\sum\limits_{p=0}^{n-K}|\triangle_{p,n}X|^{\beta},\label{eq.8.1}\\
 W_n(\beta)&=n^{\beta H}V_n(\beta). \label{eq.8.2}
 \end{align}
 Notice that $V_n(\beta)$ is the empirical mean of order $\beta$ and $W_n(\beta)$ is expected to converge to its mean. 
 The estimator of $H$ is defined by 
\begin{align}
\widehat{H}_n&=\frac{1}{\beta}\cdot \log_2\frac{V_{n/2}(\beta)}{V_n(\beta)}. \label{eq.8}
\end{align}
We are now in position to define an estimator of $\alpha$. We define first auxiliary functions $\psi_{u,v},h_{u,v},\varphi_{u,v}$ before introducing the estimator of $ \alpha$, where $u>v>0$.\\  
Let $\psi_{u,v}$: $\mathbb{R^+}\times\mathbb{R^+}\rightarrow \mathbb{R}$ be the function defined by
\begin{equation}\label{eq.19.1}
\psi_{u,v}(x,y)=-v\ln x+u\ln y+C(u,v),
\end{equation}
where $C(u,v)=\frac{u-v}{2}\ln (\pi)+u\ln \left( \Gamma(1+\frac{v}{2})\right)+v \ln \left(\Gamma(\frac{1-u}{2})\right)
 -v \ln \left(\Gamma(1+\frac{u}{2})\right)
 -u\ln \left(\Gamma(\frac{1-v}{2})\right)$.\\
Let $h_{u,v}: (0,+\infty)\rightarrow (-\infty,0)$ be the function defined by
\begin{equation}\label{eq.18.1}
h_{u,v}(x)=u\ln \left(\Gamma(1+\frac{v}{x})\right)- 
 v\ln \left(\Gamma(1+\frac{u}{x})\right)
\end{equation}
We will prove later that $h_{u,v}$ is bijective.
 Let $\varphi_{u,v}:\mathbb{R}\rightarrow [0,+\infty)$ be the function defined by
  \begin{equation}\label{eq.20.1}
  \varphi_{u,v}(x)=\begin{cases}
          0 &\mbox {  if $x\geq 0$}\\
          h^{-1}_{u,v}(x)& \mbox {   if $x<0$}
         \end{cases}
\end{equation}
  where $h_{u,v}$ is defined as in (\ref{eq.18.1}).\\
  Let $\beta_1,\beta_2$ be in $\mathbb{R}$ such that $-1/2<\beta_1<\beta_2<0$. The estimator of $ \alpha$ is defined by
\begin{equation}\label{eq.21}
\hat{\alpha}_n=\varphi_{-\beta_1,-\beta_2}\left(\psi_{-\beta_1,-\beta_2}(W_n(\beta_1),W_n(\beta_2))\right),
\end{equation}
where $\psi_{u,v}, \varphi_{u,v}$ are defined as in (\ref{eq.19.1}) and (\ref{eq.20.1}), respectively.\\
With $\beta \in (-\frac{1}{2},0)$ fixed, we will make the following assumption:
There exist a sequence $\{b_n,n\in\mathbb{N}\}$ and a constant $C$ such that $\lim\limits_{n\rightarrow +\infty}b_n=0, b_{n/2}=O(b_n)$ and
\begin{equation}\label{eq.9.2}
 \limsup_{n\rightarrow +\infty}\frac{1}{nb_n^2}\sum_{k\in \mathbb{Z},|k|\leq n}
 | cov(|\triangle_{k,1}X|^\beta,|\triangle_{0,1}X|^\beta)|\leq C^2.
  \end{equation}   
\begin{remark}\label{remark.2}
The assumption (\ref{eq.9.2}) is important to prove the consistency of the estimators of the self-similarity and the stability indices. We will see its role in the main theorem below.
\end{remark}
Now we are in position to present our main results for the estimation of $H$ and $\alpha$, based on the assumption (\ref{eq.9.2}).
\begin{thm}\label{thm.5}
Let $X$ be a $H$-sssi, $S\alpha S$ random process that satisfies assumption (\ref{eq.9.2}). Also, let $\beta,\beta_1,\beta_2\in\mathbb{R}, -\frac{1}{2}<\beta<0, -\frac{1}{2}<\beta_1<\beta_2<0$ and $\widehat{H}_n,\hat{\alpha}_n$ be defined as in (\ref{eq.8}) and (\ref{eq.21}), respectively. 
Then as $n\rightarrow  +\infty$, one has 
 $$\widehat{H}_n\xrightarrow{\mathbb{P}} H,
\hat{\alpha}_n\xrightarrow{\mathbb{P}} \alpha, $$ moreover
 $ \widehat{H}_n-H=O_{\mathbb{P}}(b_n),
  \hat{\alpha}_n-\alpha=O_{\mathbb{P}}(b_n)$, where $O_{\mathbb{P}}$ is defined by:\\
$\bullet X_n=O_{\mathbb {P}}(1)$ iff for all $\epsilon>0$, 
there exists $M>0$ such that $\sup\limits_{n}\mathbb{P}(|X_n|>M)<\epsilon$,\\
$\bullet Y_n=O_{\mathbb {P}}(a_n)$ means $Y_n=a_nX_n$ with $X_n=O_{\mathbb {P}}(1)$.
  \end{thm}
  See Subsection \ref{subsection.4.2} for the proof of Theorem \ref{thm.5}.
  \section{Examples}\label{section.3}
In this section, we study 
 four classical examples: 
fractional Brownian motion, $S\alpha S$-stable L\'evy motion,
well-balanced linear fractional stable motion, Takenaka's process. 
For these, we will show in Section \ref{section.4} that (\ref{eq.9.2}) is valid, so that the conclusion of Theorem \ref{thm.5} holds. We precise this theorem by providing the rate of convergence defined in (\ref{eq.9.2}) and a central limit theorem for the first two cases.  
\subsection{Fractional Brownian motion}\label{subsection.3.1}
\begin{defn}\emph{Fractional Brownian motion}\\
Fractional Brownian motion is a centered Gaussian process with covariance given by
$$\mathbb{E}X(t)X(s)=\frac{\mathbb{E}X(1)^2}{2}\{|s|^{2H}+|t|^{2H}-|s-t|^{2H}\}.$$
\end{defn}
Fractional Brownian motion is a $H$-sssi 2-stable process (see, e.g., \cite{Cohen2013}, p. 59). We will prove that the condition (\ref{eq.9.2}) is satisfied with $b_n=n^{-1/2}$, then the results in Theorem \ref{thm.5} are obtained. Moreover, we can obtain the asymptotic normality of the estimators of the self-similarity index $H$ and the stability index $\alpha=2$.\\
 Let $X$ be a $H$ fractional Brownian motion with $H\in(0,1)$. We first present the variances $\Xi_1, \Sigma_1$ for the limit distributions of the central limit theorems for the estimators of $H$ and $\alpha$.\\ 
We will mimic the Breuer-Major's theorem (see e.g., Theorem 7.2.4 in \cite{Nourdin2012}) to define these variances. 
For $\beta\in\mathbb{R},-1/2<\beta<0$, let us introduce the following function
\begin{align}\label{eq.11.0}
 f_{\beta}(x)=\sqrt{var \triangle_{0,1}X}^{\beta}(|x|^\beta-\mathbb{E}|Z_0|^\beta),
\end{align}
where $Z_0=\frac{\triangle_{0,1}X}{\sqrt{var \triangle_{0,1}X}}$.\\
Following Proposition \ref{prop.A.1} in Appendix,  we can write $f_\beta$ in terms of Hermite polynomials in a unique way 
\begin{align}\label{eq.11.1}
f_\beta(x)=\sum_{q \geq d}f_{\beta,q}H_q(x),
\end{align}
where $d$ is the Hermite rank of $f_{\beta}$ and $d\geq 2, \sum\limits_{q\geq d}q!f^2_{\beta,q}<+\infty $.  
Let
\begin{equation}\label{eq.11.2.1}
 \rho(r)=\frac{\sum\limits_{p,p'=0}^Ka_pa_{p'}|r+p-p'|^{2H}}{\sum\limits_{p,p'=0}^Ka_pa_{p'}|p-p'|^{2H}},
 \end{equation}
 \begin{equation}\label{eq.11.2.2}
 \rho_1(r)
 =\frac{\sum\limits_{p,p'=0}^Ka_pa_{p'}|r+p-2p'|^{2H}}{2^H\sum\limits_{p,p'=0}^Ka_pa_{p'}|p-p'|^{2H}},
\end{equation}
\begin{equation}\label{eq.11.2.3}
 \varGamma_1=\begin{pmatrix}
              \sum\limits_{q\geq d}q!f_{\beta,q}^2\sum\limits_{r\in \mathbb{Z}}\rho^q(r)&
              \sum\limits_{q\geq d}q!f_{\beta,q}^2\sum\limits_{r\in \mathbb{Z}}\rho_1^q(r)\\
              &\\
              \sum\limits_{q\geq d}q!f_{\beta,q}^2\sum\limits_{r\in \mathbb{Z}}\rho_1^q(r)&
              2\sum\limits_{q\geq d}q!f_{\beta,q}^2\sum\limits_{r\in \mathbb{Z}}\rho^q(r).\\
             \end{pmatrix}
\end{equation}
and  $\phi: \mathbb{R}^+\times \mathbb{R}^+\rightarrow  \mathbb{R}$
be defined by
\begin{equation}\label{eq.11.3.1}
\phi(x,y) =\frac{1}{\beta}\log_2\frac{x}{y}.
\end{equation}
Then $\Xi_1$ is defined by 
\begin{equation}\label{eq.11.2.4}
 \Xi_1=\phi'(x_0,y_0)\varGamma_1\phi'(x_0,y_0)^t,
\end{equation}
where 
\begin{equation}\label{eq.11.2.5}
(x_0,y_0)=(\mathbb{E}|\triangle_{0,1}X|^\beta,\mathbb{E}|\triangle_{0,1}X|^\beta).
\end{equation}
To define $\Sigma_1$, let $-1/2<\beta_1<\beta_2<0$, following Proposition \ref{prop.A.1} in Appendix,  we can write $f_{\beta_1}, f_{\beta_2}$ in terms of Hermite polynomials in a unique way 
\begin{align}\label{eq.21.1}
f_{\beta_1}(x)=\sum_{q \geq d_1}f_{\beta_1,q}H_q(x),f_{\beta_2}(x)=\sum_{q \geq d_1}f_{\beta_2,q}H_q(x)
\end{align}
where $d_1$ is the minimum of the Hermite ranks of $f_{\beta_1}$ and $f_{\beta_2}$,  $d_1\geq 2$ and 
$$ \sum\limits_{q\geq d}q!f^2_{\beta_1,q}<+\infty, \sum\limits_{q\geq d}q!f^2_{\beta_2,q}<+\infty .$$ 
 Let
\begin{equation}\label{eq.22}
\Sigma_1=\nabla_{\varphi_{-\beta_1,-\beta_2} \circ \psi_{-\beta_1,-\beta_2}}(x_1,y_1) \varGamma_2 \nabla_{\varphi_{-\beta_1,-\beta_2} \circ \psi_{-\beta_1,-\beta_2}}(x_1,y_1)^t
\end{equation}
where  $\psi_{u,v}, \varphi_{u,v}$ are defined by (\ref{eq.19.1}), (\ref{eq.20.1}) respectively, $\nabla$ is the differential operator and 
\begin{align}
(x_1,y_1)&=(\mathbb{E}|\triangle_{0,1}X|^{\beta_1},\mathbb{E}|\triangle_{0,1}X|^{\beta_2}),\label{eq.22.1}\\
\varGamma_2&=\begin{pmatrix}
           \sigma_{\beta_1}^2 & \rho_{\beta_1,\beta_2}\\
           \rho_{\beta_1,\beta_2} & \sigma_{\beta_2}^2
          \end{pmatrix}, \label{eq.23}
\end{align}
\begin{align}
\sigma_{\beta_1}^2&=\sum_{q=d_1}^{+\infty} q! f_{\beta_1,q}^2\sum_{k\in \mathbb{Z}}\rho(k)^q,
\sigma_{\beta_2}^2=\sum_{q=d_1}^{+\infty} q! f_{\beta_2,q}^2\sum_{k\in \mathbb{Z}}\rho(k)^q,
\rho_{\beta_1,\beta_2}=\sum_{q=d_1}^{+\infty} q! f_{\beta_1,q}f_{\beta_2,q}\sum_{k\in \mathbb{Z}}\rho(k)^q\label{eq.26}.
\end{align}
We can now state the following theorem, which precises the results for the estimation of $H$ and $\alpha$ in the case of fractional Brownian motion.
\begin{thm}\label{thm.6}
Let $X$ be a fractional Brownian motion. Then\\
a) $$\widehat{H}_n- H= O_\mathbb{P}(n^{-1/2}), \hat{\alpha}_n-2=O_\mathbb{P}(n^{-1/2}),$$
b) $$\sqrt{n}(\widehat{H}_{n}-H)
\xrightarrow{(d)}\mathcal{N}_1(0,\Xi_1)),\sqrt{n}(\hat{\alpha}_{n}-2)
\xrightarrow{(d)}\mathcal{N}_1(0,\Sigma_1) $$
as $n\rightarrow +\infty$, 
where $\Xi_1, \Sigma_1$ are defined by (\ref{eq.11.2.4}) and (\ref{eq.22}), respectively.
\end{thm}
See Subsection \ref{subsection.4.3} for the proof of Theorem \ref{thm.6}.
\subsection{\texorpdfstring{$S\alpha S$-}{}stable L\'evy motion}
\begin{defn}\emph{$S\alpha S$-stable L\'evy motion}\\
A stochastic process $\{X(t), t\geq 0\}$ is called (standard) $S\alpha S$-stable L\'evy motion if
$X(0)=0$ (a.s.), 
$X$ has independent increments and, 
for all $0\leq s<t<\infty $ and 
for some $0<\alpha\leq 2$, $X(t)-X(s)$ is a $S\alpha S$ random variable with characteristic function given by
$$ \mathbb{E} e^{i\theta (X(t)-X(s))}=
   \exp\left(-(t-s) | \theta|^\alpha\right).
  $$
  \end{defn}
The condition (\ref{eq.9.2}) is proved to be  satisfied with $b_n=n^{-1/2}$, then the results in Theorem \ref{thm.5} are ascertained. Similar to the case of fractional Brownian motion, we obtain the asymptotic normality of $H$ and $\alpha$.\\
The variances $\Xi_2, \Sigma_2$ for the limit distributions of the central limit theorems for the estimators of $H$ and $\alpha$ are defined as follows.\\
Let $X$ be a $S\alpha S-$stable L\'evy motion, we define the variance for the limit distribution of the central limit theorem for the estimator of $H$ by
\begin{equation}\label{eq.35}
\Xi_2=\phi'(x_0,y_0)\varGamma_3 \phi'(x_0,y_0)^t,
\end{equation}
where $\phi(x,y), (x_0,y_0)$ are defined by (\ref{eq.11.3.1}), (\ref{eq.11.2.5}), respectively and
\begin{align}
\varGamma_3&=\begin{pmatrix}
\sigma_1^2 & \sigma_{1,2}\\
\sigma_{1,2}&\sigma_2^2
\end{pmatrix},\label{eq.34}
\end{align}
\begin{align}
\sigma_1^2&=var |\triangle_{0,1}X|^\beta+ cov(|\triangle_{0,1}X|^\beta,|\triangle_{1,1}X|^\beta)\notag\\
&+2\sum\limits_{p=1}^{K-1} \left( cov(|\triangle_{0,1}X|^\beta,
|\triangle_{2p,1}X|^\beta)+cov(|\triangle_{0,1}X|^\beta,
|\triangle_{2p+1,1}X|^\beta)\right) \notag\\
&+2\sum\limits_{p=1}^{K-1} \left(cov(|\triangle_{1,1}X|^\beta,
|\triangle_{2p,1}X|^\beta)+cov(|\triangle_{1,1}X|^\beta,
|\triangle_{2p+1,1}X|^\beta)\right),\label{eq.31}\\
 \sigma_2^2&=2\left(var |\triangle_{0,1}X|^\beta+2\sum\limits_{p=1}^{K-1}cov(|\triangle_{0,1}X|^\beta,
|\triangle_{p,1}X|^\beta)\right) \label{eq.32}\\
 \sigma_{1,2}&=2^{\beta H}\left(cov(|\triangle_{0,2}X|^\beta,
|\triangle_{0,1}X|^\beta)+cov(|\triangle_{1,2}X|^\beta,
|\triangle_{0,1}X|^\beta)\right) \notag\\
&+2^{\beta H}\sum\limits_{p=1}^{K-1}\left(cov(|\triangle_{0,2}X|^\beta,
|\triangle_{p,1}X|^\beta)+cov(|\triangle_{1,2}X|^\beta,
|\triangle_{p,1}X|^\beta)\right)\notag\\
&+ 2^{\beta H}\sum\limits_{p=1}^{K-1}\left(cov(|\triangle_{0,1}X|^\beta,
|\triangle_{2p,2}X|^\beta)+cov(|\triangle_{0,1}X|^\beta,
|\triangle_{2p+1,2}X|^\beta)\right)\label{eq.33}
\end{align}
The variance for the limit distribution of the central limit theorem for the estimator of $\alpha$ is defined by
\begin{equation}\label{eq.29}
\Sigma_2=\nabla_{\varphi_{-\beta_1,-\beta_2} \circ \psi_{-\beta_1,-\beta_2}}(x_1,y_1) \varGamma_4 \nabla_{\varphi_{-\beta_1,-\beta_2} \circ \psi_{-\beta_1,-\beta_2}}(x_1,y_1)^t
\end{equation}
where $\psi_{u,v}, \varphi_{u,v}, (x_1,y_1)$ are defined as in (\ref{eq.19.1}), (\ref{eq.20.1}) and (\ref{eq.22.1}), respectively,
\begin{align}
\varGamma_4&=\begin{pmatrix}
\sigma_1^2 & \sigma_{1,2}\\
\sigma_{1,2}&\sigma_2^2
\end{pmatrix},\label{eq.30}
\end{align}
\begin{align}
\sigma_1^2&=var|\triangle_{0,1}X|^{\beta_1}+2\sum\limits_{k=1}^{K-1}cov(|\triangle_{0,1}X|^{\beta_1},|\triangle_{k,1}X|^{\beta_1}),\label{eq.27.0}\\
\sigma_2^2&=var|\triangle_{0,1}X|^{\beta_2}+2\sum\limits_{k=1}^{K-1}cov(|\triangle_{0,1}X|^{\beta_2},|\triangle_{k,1}X|^{\beta_2}) \label{eq.27}\\
\sigma_{1,2}&=cov(|\triangle_{0,1}X|^{\beta_1},|\triangle_{0,1}X|^{\beta_2})+\frac{1}{2}\sum\limits_{k=1}^{K-1} \left( cov(|\triangle_{0,1}X|^{\beta_1},|\triangle_{k,1}X|^{\beta_2})+cov(|\triangle_{0,1}X|^{\beta_2},|\triangle_{k,1}X|^{\beta_1}) \right) \label{eq.28.1}.
\end{align}
We now present the results on the asymptotic normality for the case of $S\alpha S$-stable L\'evy motion.
\begin{thm}\label{thm.7}
Let $X$ be a $S\alpha S$-stable L\'evy motion. Then\\ 
a) $$\widehat{H}_n- H= O_\mathbb{P}(n^{-1/2}), \hat{\alpha}_n-\alpha=O_\mathbb{P}(n^{-1/2})$$
b) $$\sqrt{n}(\widehat{H}_{n}-H)
\xrightarrow{(d)}\mathcal{N}_1(0,\Xi_2)),\sqrt{n}(\hat{\alpha}_{n}-\alpha)
\xrightarrow{(d)}\mathcal{N}_1(0,\Sigma_2) $$
as $n\rightarrow +\infty$, where $\Xi_2, \Sigma_2$ are defined by (\ref{eq.35}) and (\ref{eq.29}), respectively.
\end{thm}
The proof of Theorem \ref{thm.7} is given in Subsection \ref{subsection.4.3}.
\subsection{Well-balanced linear fractional stable motion}\label{subsection.3.3}
\begin{defn}\emph{Well-balanced linear fractional stable motion}\\
 Let $M$ be a $S\alpha S$ random measure, $0<\alpha\leq 2$, with Lebesgue control measure and consider
 $$
 X(t)=\int_{-\infty}^{+\infty}(\mid t-x\mid^{H-1/\alpha}-\mid x\mid^{H-1/\alpha})M(dx), -\infty <t<+\infty
 $$
 where $0<H<1, H\neq 1/\alpha$. The process $X$ is called the well-balanced linear fractional stable motion.
 Then $X$ is a $H$-sssi process (Proposition 7.4.2, \cite{Taqqu1994}).
\end{defn}
Let
\begin{equation}\label{eq.13}
b_n=
\begin{cases}
 n^{-1/2}&\mbox{ if } H<L+1-\frac{2}{\alpha} \\ 
 n^{\frac{\alpha H-(L+1)\alpha}{4}}&\mbox{ if } H>L+1-\frac{2}{\alpha} \\ 
 \sqrt{\frac{\ln n}{n}}&\mbox{ if } H=L+1-\frac{2}{\alpha}. 
\end{cases}
\end{equation}
It is clear that $\lim\limits_{n\rightarrow +\infty}b_n=0$ and $b_{n/2}=O(b_n)$. We get the following results for the estimation of $H$ and $\alpha$.
\begin{thm}\label{thm.8}
Let $\{X(t)\}_{t\in{\mathbb{R}}}$ be a well-balanced linear fractional stable motion with 
$0<H<1, H\neq 1/\alpha $ and $0<\alpha<2$. 
Then for every $\beta\in (-1/2,0)$, Theorem \ref{thm.5} is true with
$b_n$ defined by (\ref{eq.13}).
\end{thm}
See Subsection \ref{subsection.4.3} for the proof of Theorem \ref{thm.8}.
\subsection{Takenaka's processes}\label{subsection.3.4}
\begin{defn}\emph{Takenaka's process}\\
Let $M$ be a symmetric $\alpha-$ stable random measure $(0<\alpha<2)$ with control measure
\[m(dx,dr)=r^{\nu-2}dxdr, (0<\nu<1).\]
Let $t\in \mathbb{R}$, set 
\[C_t=\{(x,r)\in \mathbb{R}\times \mathbb{R}^+, |x-t|\leq r\}, S_t=C_t \Delta C_0\]
where $\Delta $ denotes the symmetric difference between two sets.\\
Takenaka's process is defined by 
\begin{equation}\label{eq.15}
X(t)=\int\limits_{\mathbb{R}\times\mathbb{R}^+}\mathbbm{1}_{S_t}(x,r)M(dx,dr).
\end{equation}
\end{defn}
Following Theorem 4 in \cite{Takenaka1991}, the process $X$ is $\nu/\alpha-$sssi.
Let
\begin{equation}\label{eq.16}
b_n=n^{\frac{\nu-1}{2}}.
\end{equation}
We can now ascertain the following.
\begin{thm}\label{thm.9}
 Let $\{X_t, t\in\mathbb{R}\}$ be a Takenaka's process defined by (\ref{eq.15}).
  Then for every $\beta, \beta\in(-1/2,0)$, Theorem \ref{thm.5} is true with
$b_n$ defined by (\ref{eq.16}).
\end{thm}
The proof of Theorem \ref{thm.9} is given in Subsection \ref{subsection.4.3}.
\section{Proofs}\label{section.4}
First, we give results on expectation of negative power variations of $H$-sssi, $S\alpha S$ random processes in Subsection \ref{subsection.4.1}. Then we apply these results in Subsection \ref{subsection.4.2} to the estimation of $H$ and $\alpha$, in order to prove Theorem \ref{thm.5}. Finally, we prove that Theorem \ref{thm.5} is true for four classical examples presented in Section \ref{section.3}.
\subsection{Negative power expectation and auxiliary results}\label{subsection.4.1}
Now we present some results on expectation of negative power variations of $H$-sssi, $S\alpha S$ random processes proved by using theory of distribution. 
These results are the tools to prove assumptions (\ref{eq.9.2}) for four examples in Section \ref{section.3} and to prove the main result on the estimation for $\alpha$. 
\subsubsection{Auxiliary results}\label{subsubsection.4.1.1}
We start with the following lemma which confirms the existence of the expectation of $\beta$-negative power variation of a symmetric stable random variable when $ \beta\in \mathbb{C},Re(\beta)\in (-1,0)$.
 \begin{lem}\label{lem.0}
  Let $X$ be a $S\alpha S$ random variable, $\beta\in\mathbb{C}, Re(\beta)\in (-1,0)$, then $|\mathbb{E}|X|^{\beta}|<+\infty$.
 \end{lem}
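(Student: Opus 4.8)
The plan is to reduce the complex exponent to its real part and then bound the integral of $|x|^{Re(\beta)}$ against the density of $X$, splitting the integral according to whether $|x|$ is small or large.

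First I would write $\beta=a+ib$ with $a=Re(\beta)\in(-1,0)$ and observe that for $x\neq 0$,
$$\bigl|\,|x|^\beta\,\bigr|=\bigl|\,|x|^a e^{ib\ln|x|}\,\bigr|=|x|^a,$$
so that
$$\bigl|\,\mathbb{E}|X|^\beta\,\bigr|\le \mathbb{E}\bigl|\,|X|^\beta\,\bigr|=\mathbb{E}|X|^a.$$
It therefore suffices to prove $\mathbb{E}|X|^a<+\infty$ for the real number $a\in(-1,0)$.

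Next I would use the standard fact that a $S\alpha S$ random variable with $0<\alpha\le 2$ has a bounded continuous density $f$ (see \cite{Taqqu.}), and set $M=\sup_x f(x)<+\infty$. Splitting
$$\mathbb{E}|X|^a=\int_{|x|\le 1}|x|^a f(x)\,dx+\int_{|x|>1}|x|^a f(x)\,dx,$$
I would bound each term separately. For the tail term, $a<0$ gives $|x|^a\le 1$ on $\{|x|>1\}$, whence it is at most $\int_{\mathbb{R}}f=1$. For the term near the origin, boundedness of the density yields
$$\int_{|x|\le 1}|x|^a f(x)\,dx\le M\int_{-1}^1|x|^a\,dx=2M\int_0^1 x^a\,dx=\frac{2M}{a+1}<+\infty,$$
the finiteness being exactly the statement that $a+1>0$. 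Summing the two bounds gives $\mathbb{E}|X|^a<+\infty$.

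The only nontrivial input is the boundedness of the stable density, which tames the singularity of $|x|^a$ at the origin; the remaining argument is an elementary splitting. It is worth stressing that both constraints on $\beta$ are used: the lower bound $-1<Re(\beta)$ secures integrability of $|x|^a$ at the origin, while the upper bound $Re(\beta)<0$ secures tail integrability. This is genuinely needed, because for $0<\alpha<2$ a $S\alpha S$ law only admits absolute moments of order strictly below $\alpha$, so positive powers of $|X|$ need not be integrable.
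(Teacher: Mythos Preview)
Your proof is correct and follows essentially the same approach as the paper: reduce the complex exponent to its real part via $\bigl||x|^\beta\bigr|=|x|^{Re(\beta)}$, then split $\mathbb{E}|X|^a$ at $|x|=1$, using boundedness of the stable density to control the singularity at the origin and $|x|^a\le 1$ to control the tail. The only cosmetic difference is that the paper first treats real $\beta$ and then passes to complex, whereas you do the reduction first; the content is identical.
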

 The proof of Lemma \ref{lem.0} is given in Subsection \ref{subsubsection.4.1.2}\\
 The next two important results will be used to prove the condition (\ref{eq.9.2}) for our examples in next section. Theorem \ref{thm.1} gives a way to determine the expectation of $\beta$-negative power variation of a symmetric stable random variable whereas Theorem \ref{thm.2} helps to establish the inequality of (\ref{eq.9.2}) for illustrated examples. \\
Let $(S,\mu)$ be a measure space, $h,g\in L^\alpha(S,\mu)$ and $M$ be a symmetric $\alpha$-stable random measure on $S$ with control measure $\mu$, $\alpha\in(0,2)$. Set
 \begin{align}
  U&=\int\limits_Sh(s)M(ds),
  V=\int\limits_Sg(s)M(ds).\label{eq.01}
 \end{align}
 Let 
\begin{equation}\label{eq.1}  
  C_{\beta}=\frac{2^{\beta+1/2}\Gamma(\frac{\beta+1}{2})}{\Gamma(-\frac{\beta}{2})}
  \end{equation}
  where $\beta\in \mathbb {C}$ such that $ Re(\beta)\in(-1,0)$.
   \begin{thm}\label{thm.1}
 For $\beta\in \mathbb{C}, Re(\beta)\in(-1,0)$, we have
\begin{equation}\label{eq.2}
 \mathbb{E}| U|^\beta=
 \frac{1}{\sqrt{2\pi}}\int\limits_\mathbb{R}\mathcal{F}T(y)\mathbb{E}e^{iUy}dy= \frac{C_\beta}{\sqrt{2\pi}}\int\limits_\mathbb{R}\frac{\mathbb{E}e^{iUy}}{|y|^{\beta+1}}dy
\end{equation}
in the sense of distributions, where $U, V$ are defined by (\ref{eq.01}), $T= |x|^\beta$ and $\mathcal{F}T$ is Fourier transform of $T$.
\end{thm}
See Subsection \ref{subsubsection.4.1.3} for the proof of Theorem \ref{thm.1}.
\begin{thm}\label{thm.2}
Assume that
\begin{align*}
 ||U||_\alpha^\alpha&=\int\limits_S| h(s)|^\alpha\mu(ds)=1,
 ||V||_\alpha^\alpha=\int\limits_S| g(s)|^\alpha\mu(ds)=1\\
 [U,V]_2&=\int\limits_S | h(s)g(s)|^{\alpha/2}\leq \eta<1,
\end{align*}
where $U, V$ are defined as in (\ref{eq.01}). Then for $-1/2<Re(\beta)<0$, we have
 \begin{equation}\label{eq.3}
 \mathbb{E}| U|^\beta| V|^{\overline{\beta}}= \frac{C_\beta C_{\overline{\beta}}}{2\pi}
 \int\limits_{\mathbb{R}^2}\frac{\mathbb{E} e^{ixU+iyV}}{|x|^{1+\beta}| y|^{1+\overline{\beta}}}dxdy.
 \end{equation}
 Moreover, there exists a constant $C(\eta)$ such that
 \begin{equation}\label{eq.3.1}
  |cov(| U|^\beta,| V|^\beta)|\leq C(\eta)\int\limits_S| h(s)g(s)|^{\alpha/2}ds.
 \end{equation}
\end{thm}
The proof of Theorem \ref{thm.2} is given in Subsection \ref{subsubsection.4.1.4}.\\
The following two lemmas follow from Theorem \ref{thm.1} in which  Lemma \ref{lem.01} provides an important formula to construct the estimator for $\alpha$.
\begin{lem}\label{lem.01}
Let X be a standard $S\alpha S$ variable with $0<\alpha\leq 2$ and $\beta \in\mathbb{C}, -1<Re(\beta)<0$, then
\begin{equation}\label{eq.9}
\mathbb{E}| X |^\beta=\frac{2^{\beta}\Gamma(\frac{\beta+1}{2})\Gamma(1-\frac{\beta}{\alpha})}
{\sqrt{\pi}\Gamma(1-\frac{\beta}{2})}.
\end{equation}
\end{lem}
See Subsection \ref{subsubsection.4.1.5} for the proof of Lemma \ref{lem.01}.
\begin{lem}\label{lem.9}
 Let $X$ be a $S\alpha S$ process where $0<\alpha\leq 2$, $\beta\in\mathbb{C},-\frac{1}{2}<Re(\beta)<0$, then
 $$
 \mathbb{E}| \triangle_{0,1}X|^\beta \neq 0.
 $$
\end{lem}
See Subsection \ref{subsubsection.4.1.6} for the proof of Lemma \ref{lem.9}.
Now we will give the proofs for the latter results.
\subsubsection{Proof of Lemma \ref{lem.0}}\label{subsubsection.4.1.2}
 Since $X$ is a $S\alpha S$ -stable random variable, $X$ has a density function $f(x)$ that is even and continuous on $\mathbb{R}$.
We first consider the case $\beta\in \mathbb {R} $ and $-1<\beta<0$.\\
For $-1<\beta<0$, we can write:
\begin{align*}
 \mathbb{E}| X|^\beta &=\int\limits_\mathbb{R} |x|^\beta f(x)dx =\int\limits_{| x| \leq 1} |x| ^\beta f(x)dx+\int\limits_{|x|\geq 1} | x| ^\beta f(x)dx:=A+B.
\end{align*}
We have
$$ A = \int\limits_{| x| \leq 1} |x| ^\beta f(x)dx \leq 
\sup_{|x|\leq 1}| f(x)| \int_{| x| \leq 1} |x| ^\beta dx <+\infty,
 B =
2\int\limits_1^\infty x^\beta f(x) dx\leq 2.
$$
It follows that $\mathbb{E}| X|^\beta<+\infty$.
For $\beta\in \mathbb{C}, -1<Re(\beta)<0$, we have
$$
\left|\mathbb{E}|X|^\beta\right| =\left| \int\limits_\mathbb{R}| x|^{a+ib}f(x) dx\right|
\leq \int\limits_\mathbb{R}|x|^{Re(\beta)} f(x) dx<+\infty .
$$
Then we obtain the conclusion.
\subsubsection{Proof of Theorem \ref{thm.1}}\label{subsubsection.4.1.3}
To prove Theorem \ref{thm.1}, we start with the following lemma.
 \begin{lem}\label{lem.1}
  For all $x\in\mathbb{R}, \beta\in\mathbb{C}, -1<Re(\beta)<0$, let $T(x)=| x|^\beta$.
Then $T$ has Fourier transform defined by 
\begin{equation}\label{eq.4}
\mathcal{F}{T}(y)=\frac{C_{\beta}}{| y|^{\beta+1}}
\end{equation}
in the sense of distributions, where $C_{\beta}$ is defined as in (\ref{eq.1}).
\end{lem}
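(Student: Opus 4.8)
The plan is to compute the Fourier transform of $T(x)=|x|^\beta$ in the sense of tempered distributions, i.e.\ through the duality identity $\langle\mathcal{F}T,\phi\rangle=\langle T,\mathcal{F}\phi\rangle$ for every Schwartz function $\phi$, where $\mathcal{F}$ is the symmetric transform $\mathcal{F}\phi(y)=\frac{1}{\sqrt{2\pi}}\int_{\mathbb{R}}\phi(x)e^{-ixy}\,dx$ fixed by the normalisation appearing in Theorem~\ref{thm.1}. First I would check that $T$ really is a tempered distribution: since $-1<Re(\beta)<0$, the modulus $|x|^{Re(\beta)}$ is integrable near the origin and bounded at infinity, so $T\in L^1_{loc}$ with at most polynomial growth and $\mathcal{F}T$ is well defined. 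Because both sides of (\ref{eq.4}) depend holomorphically on $\beta$ on the strip $-1<Re(\beta)<0$ (the left side as a parameter-dependent distribution, the right side through $C_\beta$ and $|y|^{-\beta-1}$), it suffices to prove the formula for real $\beta\in(-1,0)$ and then extend it by analytic continuation.

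For real $\beta\in(-1,0)$ I would use the subordination identity
\begin{equation*}
|x|^\beta=\frac{1}{\Gamma(-\beta/2)}\int_0^{\infty}t^{-\beta/2-1}e^{-tx^2}\,dt,
\end{equation*}
which follows from the definition of the Gamma function after the substitution $u=tx^2$. Inserting this into $\frac{1}{\sqrt{2\pi}}\int_{\mathbb{R}}|x|^\beta e^{-ixy}\,dx$ and interchanging the order of integration — legitimate here by Fubini, since pairing against a Schwartz test function renders the double integral absolutely convergent — reduces the computation to the Gaussian transform $\frac{1}{\sqrt{2\pi}}\int_{\mathbb{R}}e^{-tx^2}e^{-ixy}\,dx=\frac{1}{\sqrt{2t}}e^{-y^2/(4t)}$. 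This leaves the one-dimensional integral
\begin{equation*}
\mathcal{F}T(y)=\frac{1}{\sqrt{2\pi}}\,\frac{\sqrt{\pi}}{\Gamma(-\beta/2)}\int_0^{\infty}t^{-\beta/2-3/2}e^{-y^2/(4t)}\,dt,
\end{equation*}
which I would evaluate by the substitution $u=y^2/(4t)$, turning it into a Gamma integral equal to $2^{\beta+1}\Gamma(\tfrac{\beta+1}{2})\,|y|^{-\beta-1}$.

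Collecting the constants then gives exactly $\mathcal{F}T(y)=\frac{2^{\beta+1/2}\Gamma((\beta+1)/2)}{\Gamma(-\beta/2)}\,|y|^{-\beta-1}=C_\beta|y|^{-\beta-1}$, which is (\ref{eq.4}). I expect the main obstacle to lie not in the algebra but in the rigorous justification of the interchange of integrations and of the passage to the distributional sense: one must pair everything with a fixed Schwartz function $\phi$, verify absolute convergence of the resulting triple integral in $(x,y,t)$, and only then collapse the $x$-integral to the Gaussian transform; the final analytic-continuation step also requires checking that $\beta\mapsto\langle T,\mathcal{F}\phi\rangle$ is holomorphic, which follows from dominated convergence together with Morera's theorem. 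A cleaner alternative that I would keep in reserve is to observe that $\mathcal{F}T$ must be an even distribution homogeneous of degree $-\beta-1\in(-1,0)$, hence of the form $c\,|y|^{-\beta-1}$, and to pin down $c$ by testing against the self-dual Gaussian $g(x)=e^{-x^2/2}$, for which $\langle\mathcal{F}T,g\rangle=\langle T,g\rangle$ yields $c=C_\beta$ after two elementary Gamma integrals.
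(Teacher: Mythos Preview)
Your proposal is correct, but your primary route differs from the paper's. The paper does precisely what you describe as your ``cleaner alternative kept in reserve'': it quotes Schwartz (\emph{Th\'eorie des distributions}, Ch.~VII, \S7, Example~5) to assert that $T=|x|^\beta$ is a tempered distribution with $\mathcal{F}T(y)=C\,|y|^{-\beta-1}$ for some constant $C$, and then identifies $C$ by pairing both sides with the self-dual Gaussian $k(x)=e^{-x^2/2}$; two changes of variable reduce $\langle T,k\rangle$ and $\langle |y|^{-\beta-1},k\rangle$ to Gamma integrals and give $C=C_\beta$. Your main argument via the subordination identity $|x|^\beta=\Gamma(-\beta/2)^{-1}\int_0^\infty t^{-\beta/2-1}e^{-tx^2}\,dt$ is a genuinely different, self-contained computation that does not lean on the classification of homogeneous distributions; it buys independence from the external reference at the cost of the Fubini justification and the analytic-continuation step you flag. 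The paper's route is shorter because the structural fact (homogeneity forces the form $c|y|^{-\beta-1}$) is outsourced, and it works directly for complex $\beta$ in the strip without a separate continuation argument. Either is fine here; just be aware that what you labelled the fallback is in fact the paper's proof.
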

\begin{proof}
For $\beta\in\mathbb{C}, -1<Re(\beta)<0$, following example 5, §7, chapter VII of \cite{Schwartz1978}, 
 then $T$ is a distribution and it has Fourier transform
 $\mathcal{F}{T}(y)=C| y|^{-(\beta+1)},$
 where $C$ is a constant.
 We will find $C$ using function $k(x)=e^{-x^2/2}$. 
 Since $T\in L^1_{loc}( \mathbb{R})$ and $k\in S(\mathbb{R}) $, in the sense of distributions, we have
 $\langle \mathcal{F}{T}, k\rangle=\langle T, \mathcal{F}{k} \rangle.$
 On the other hand,
 \begin{align*}
  \mathcal{F}{k}(y)&=\frac{1}{\sqrt{2\pi}}\int\limits_{\mathbb {R}}e^{-ixy}e^{-x^2/2}dx=e^{-y^2/2}
 \end{align*}
 then
 \begin{align*}
  \int\limits_{\mathbb{R}}| x| ^\beta e^{-x^2/2}dx &=\int\limits_{\mathbb{R}} C |y|^{-(\beta+1)}e^{-y^2/2}dy.
 \end{align*}
 By taking the change of variable, we obtain that
 $$
  \int\limits_{\mathbb{R}}| x| ^\beta e^{-x^2/2}dx=
  2^{\frac{\beta+1}{2}}\Gamma(\frac{\beta +1}{2}),
  \int\limits_{\mathbb{R}}| y| ^{-(\beta+1)} e^{-y^2/2}dy=2^{-\beta/2}\Gamma(-\frac{\beta}{2}).
$$
It follows that
$$
C=\frac{2^{\beta+1/2}\Gamma(\frac{\beta+1}{2})}{\Gamma(-\frac{\beta}{2})}=C_{\beta}.
$$
\end{proof}
From Lemma \ref{lem.1}, we have
 $\mathcal{F}T(y)=\frac{C_{\beta}}{| y|^{1+\beta}}$,
 where $f$ is the density function of $U$ and
 $C_u=2^{u+1/2}\frac{\Gamma(\frac{u +1}{2})}{\Gamma(-\frac{u}{2})}
 $.\\
  Let $\varphi$ be a non-negative, even function, $\varphi\in C_0^\infty(\mathbb{R}),
 supp\varphi\subset[-1,1],\int\limits_{\mathbb{R}}\varphi(y)dy=1$. Set 
$\varphi_\epsilon (x)=\frac{\varphi(x/\epsilon)}{\epsilon}$,
we will prove that
$g_{\epsilon}=\mathcal{F}^{-1}f*\varphi_\epsilon \in S(\mathbb{R}). $\\
 Indeed, let $\chi(x)$ be a function in $C_0^\infty(\mathbb{R})$ such that $\chi (x)=1$ 
 for $|x|\leq 1 $ and $\chi (x)=0$ 
 for $| x|\geq 2 $.\\
 We can write the characteristic function corresponding with the density function $f$ as
 $$
 e^{-\sigma^{\alpha}| x|^\alpha}=\sqrt{2\pi}\mathcal{F}^{-1}f(x):= g(x)=\chi (x)g(x)+(1-\chi (x))g(x):=g_1(x)+g_2(x)
 $$
 and
 $$
 g*\varphi_\epsilon (x)=g_1*\varphi_\epsilon (x)+g_2*\varphi_\epsilon (x).
 $$
 It is clearly that $g_1\in L^1 (\mathbb{R})$, $g_1$ has compact support, $\varphi_{\epsilon}\in C_0^{\infty}(R)$,
  so $g_1*\varphi_\epsilon\in S(\mathbb{R})$.\\
 We also have $g_2* \varphi\in S(\mathbb{R})$ since $g_2 $ and $\varphi_\epsilon (x)$ are in $S(\mathbb {R})$.\\
 Then we get $g_\epsilon\in S(\mathbb{R}) $.\\
We have
$$
\mathcal{F}g_\epsilon(x)=\sqrt{2\pi}f(x)\mathcal{F}\varphi_\epsilon(x).
$$
Since
$$
\langle T, \mathcal{F}g_\epsilon\rangle=\langle \mathcal{F}T,g_\epsilon\rangle,
$$
we obtain
\begin{align}
\int\limits_\mathbb{R}\sqrt{2\pi}| x|^\beta f(x)\mathcal{F}\varphi_\epsilon(x)dx&=
\int\limits_\mathbb{R}\mathcal{F}T(y)\mathcal{F}^{-1}f*\varphi_\epsilon (y) dy \label{eq.5}\\
&=\int\limits_\mathbb{R}\mathcal{F}^{-1}f(y)\mathcal{F}T*\varphi_\epsilon (y) dy,\label{eq.6}
\end{align}
Here we used Fubini's theorem since 
$\mathcal{F}T,\mathcal{F}^{-1}f\in L_{loc}^1(\mathbb{R}), \varphi_\epsilon\in C_0^\infty (\mathbb R)$ and
$\varphi_\epsilon$ is an even function.\\
Now we will find the limits of two sides of the equation (\ref{eq.6}) when $\epsilon\rightarrow 0$.
We first consider the left hand side of (\ref{eq.6}). One has
\begin{align*}
\lim\limits_{\epsilon\rightarrow 0}\mathcal{F}\varphi_\epsilon (x)&=
\lim\limits_{\epsilon\rightarrow 0}\int\limits_\mathbb{R} \frac{1}{\sqrt{2\pi}}e^{-itx}\varphi_\epsilon (t)dt
=\lim\limits_{\epsilon\rightarrow 0}\int\limits_\mathbb{R} \frac{1}{\sqrt{2\pi}}e^{-itx}
\frac{\varphi (t/\epsilon)}{\epsilon} dt
=\lim\limits_{\epsilon\rightarrow 0}\int\limits_\mathbb{R} \frac{1}{\sqrt{2\pi}}e^{-i\epsilon u x}\varphi (u) du.
\end{align*}
For $x,u\in\mathbb {R}, e^{-i\epsilon ux}\varphi (u)\rightarrow \varphi (u) $ when $\epsilon\rightarrow 0$,
and $| e^{-i\epsilon u x}\varphi(u)|=\varphi(u), \int\limits_\mathbb{R}\varphi (u) du=1$,
following Lebesgue dominated convergence theorem, one gets
$$
\lim\limits_{\epsilon\rightarrow 0}\mathcal{F}\varphi_\epsilon (x)=
\int\limits_\mathbb{R} \frac{1}{\sqrt{2\pi}}\varphi (u)du=\frac{1}{\sqrt{2\pi}}.
$$
Therefore, for $x\neq 0$,
$
\sqrt{2\pi}| x|^\beta f(x)\mathcal{F}\varphi_\epsilon(x)\rightarrow 
\sqrt{2\pi}| x|^\beta f(x)
$
pointwise when $\epsilon\rightarrow 0$. We have
\begin{align*}
 \left| | x|^\beta f(x)\mathcal{F}\varphi_\epsilon(x)\right|&=
 \frac{1}{\sqrt{2\pi}}| x|^{Re(\beta)}f(x)| \int\limits_{\mathbb{R}}e^{-itx}\frac{\varphi(t/\epsilon)}{\epsilon} dt|
 =\frac{1}{\sqrt{2\pi}}|x|^{e(\beta)}f(x)
 \left| \int\limits_{\mathbb{R}}e^{-i\epsilon u x}\varphi(u)du \right|\\
 &\leq \frac{1}{\sqrt{2\pi}}| x|^{Re(\beta)}f(x)
 \left| \int\limits_{\mathbb{R}}\varphi(u)du \right|
 =\frac{1}{\sqrt{2\pi}}|x|^{Re(\beta)}f(x).
\end{align*}
Moreover, applying Lemma \ref{lem.0}, it follows that $\int\limits_{\mathbb{R}}| x|^{Re(\beta)}f(x)dx<\infty$.
Thus applying Lebesgue dominated convergence theorem, the left hand side of (\ref{eq.6}) converges to 
$\int\limits_\mathbb{R}|x| ^{Re(\beta)} f(x)dx$.\\
Turning back to the right hand side of (\ref{eq.6}), since  $\mathcal{F}T$ is continuous at $y\neq 0$ and 
  $\mathcal{F}T \in L^1_{loc}(\mathbb{R})$, we get $$\lim\limits_{\epsilon \rightarrow 0} \mathcal{F}T*\varphi_\epsilon(y)=
\mathcal{F}T(y)$$ for $y\in\mathbb{R}^*$. It follows that
  $$
\lim\limits_{\epsilon \rightarrow 0} \mathcal{F}^{-1}f(y)\mathcal{F}T*\varphi_\epsilon(y)=
\mathcal{F}^{-1}f(y)\mathcal{F}T(y)
$$
pointwise almost everywhere. We have the following inequality on $\mathcal{F}T$ and $\varphi_\epsilon$.
\begin{lem}\label{lem.3}
There exists a constant $C>0$ such that for all $\epsilon>0, x\neq 0$, we have
$$| \mathcal{F}T| *\varphi_\epsilon (x)\leq C|\mathcal{F} T| (x).$$
\end{lem}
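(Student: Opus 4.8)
The plan is to reduce the statement to a single scaling inequality for the homogeneous function $|y|^{-\gamma}$, where I set $\gamma := Re(\beta)+1$, so that $0<\gamma<1$ precisely because $-1<Re(\beta)<0$. First I would invoke Lemma \ref{lem.1}, which gives $\mathcal{F}T(y)=C_\beta/|y|^{\beta+1}$. Since $|y|^{i\,Im(\beta)}$ has modulus one, the \emph{modulus} of the Fourier transform is $|\mathcal{F}T|(y)=|C_\beta|\,|y|^{-\gamma}$. The constant $|C_\beta|$ cancels on both sides of the asserted inequality, so it suffices to prove $(F*\varphi_\epsilon)(x)\le C\,F(x)$ for the purely real function $F(y)=|y|^{-\gamma}$, with $C$ independent of $\epsilon>0$ and $x\neq 0$.

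The key simplification is to exploit homogeneity to remove the parameter $\epsilon$. After the substitution $t=\epsilon s$ in $(F*\varphi_\epsilon)(x)=\int F(x-t)\varphi_\epsilon(t)\,dt$, using $\mathrm{supp}\,\varphi\subset[-1,1]$ and $\int\varphi=1$, one obtains $(F*\varphi_\epsilon)(x)=\int_{|s|\le1}F(x-\epsilon s)\varphi(s)\,ds=\epsilon^{-\gamma}(F*\varphi)(x/\epsilon)$, where the last equality uses $F(\lambda y)=\lambda^{-\gamma}F(y)$. Since likewise $F(x)=\epsilon^{-\gamma}F(x/\epsilon)$, the ratio $(F*\varphi_\epsilon)(x)/F(x)$ equals $(F*\varphi)(y)/F(y)$ with $y:=x/\epsilon$, and therefore depends on $x$ and $\epsilon$ only through $y$. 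Consequently it is enough to treat the case $\epsilon=1$, i.e. to show $(F*\varphi)(y)\le C\,|y|^{-\gamma}$ for every $y\neq0$.

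For $\epsilon=1$ I would split on the size of $|y|$. When $|y|\ge 2$ and $|s|\le 1$ we have $|y-s|\ge|y|/2$, hence $(F*\varphi)(y)\le 2^{\gamma}|y|^{-\gamma}\int\varphi=2^{\gamma}|y|^{-\gamma}$. When $|y|\le 2$ I would note that $F*\varphi=|\cdot|^{-\gamma}*\varphi$ is the convolution of the $L^1_{loc}$ (tempered) distribution $|\cdot|^{-\gamma}$ with a test function $\varphi\in C_0^\infty$, hence is itself a $C^\infty$ function, in particular bounded by some $M$ on the compact ball $\{|y|\le 2\}$; since $|y|^{-\gamma}\ge 2^{-\gamma}$ there, this gives $(F*\varphi)(y)\le M\le M2^{\gamma}|y|^{-\gamma}$. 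Taking $C=\max(2^{\gamma},M2^{\gamma})$ closes the argument.

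The main obstacle is the uniformity of $C$ in both $\epsilon$ and $x$, precisely in the regime $|x|\sim\epsilon$ where $x$ sits inside the mollified singularity and the naive pointwise bound $|x-\epsilon s|\approx|x|$ fails; the homogeneity reduction is exactly what renders this regime transparent. The only place where the hypothesis $\gamma<1$ is essential is in guaranteeing that $|\cdot|^{-\gamma}*\varphi$ is finite and bounded near the origin, i.e. that the singularity is integrable. If one prefers to avoid the smoothness assertion, the equivalent direct estimate handles the regime $\epsilon>|x|/2$ by the substitution $u=x-\epsilon s$, which yields $\int_{|s|\le1}|x-\epsilon s|^{-\gamma}\,ds\le \tfrac1\epsilon\int_{-\epsilon}^{\epsilon}|u|^{-\gamma}\,du=\tfrac{2}{1-\gamma}\,\epsilon^{-\gamma}\le \tfrac{2^{\gamma+1}}{1-\gamma}|x|^{-\gamma}$, again using $\gamma<1$ for convergence; the complementary regime $\epsilon\le|x|/2$ is the elementary pointwise bound above.
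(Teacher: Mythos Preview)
Your proof is correct. Both you and the paper split into the regimes $|x|\gtrsim\epsilon$ and $|x|\lesssim\epsilon$, and in each regime the estimates are essentially the same; in particular, your ``alternative direct estimate'' at the end is precisely the paper's argument: bound $\varphi\le C\,\mathbbm{1}_{[-1,1]}$, use monotonicity of $|y|^{-\gamma}$ for $x>2\epsilon$, and integrate the singularity explicitly for $x\le 2\epsilon$.

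What is genuinely different is your first reduction. You observe that $F(y)=|y|^{-\gamma}$ is homogeneous of degree $-\gamma$, so $(F*\varphi_\epsilon)(x)=\epsilon^{-\gamma}(F*\varphi)(x/\epsilon)$ and the ratio $(F*\varphi_\epsilon)(x)/F(x)$ depends only on $y=x/\epsilon$; this collapses the two-parameter problem to a one-parameter one and makes the uniformity in $\epsilon$ automatic. The paper does not exploit this scaling and instead carries $\epsilon$ through both cases. Your route is a little cleaner conceptually and explains \emph{why} the constant is uniform (scale invariance of a power law convolved against a dilation family), whereas the paper's approach is more elementary in that it never invokes smoothness of $F*\varphi$ and gives fully explicit constants. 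Either is fine; your homogeneity reduction together with continuity of $F*\varphi$ on $\{|y|\le 2\}$ (which is legitimate since $\gamma<1$ makes $F\in L^1_{loc}$) is the more elegant packaging.
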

\begin{proof}
  Since $\mathcal{F}T$ and $\varphi_\epsilon$ are even functions, we just need to prove this lemma for $x>0$. From the fact that $\varphi$ has compact support, then there exists a constant $C$ such that for all $x>0$,
 $
 \varphi(x)\leq C \mathbbm{1}_{[ -1,1]}(x).
 $\\
 We consider first the case $x>2\epsilon$. One has 
 \begin{align*}
 | \mathcal{F}T| *\varphi_\epsilon(x)&=\int\limits_{\mathbb{R}}| \mathcal{F}T| (y)\varphi_\epsilon (x-y)dy
  \leq \frac{C}{\epsilon}
 \int\limits_{x-\epsilon}^{x+\epsilon}| \mathcal{F} T| (y) dy\\
&\leq 2C| \mathcal{F}T| (x-\epsilon)\leq 2C | \mathcal{F}T| (x/2)=C_1| \mathcal{F}T| (x).
 \end{align*}
 If $x\leq 2\epsilon$, then
 \begin{align*}
  |\mathcal{F}T| *\varphi_\epsilon(x)&\leq \frac{C}{\epsilon}
  \int\limits_{x-\epsilon}^{x+\epsilon} | \mathcal{F}T| (y)dy
  \leq  \frac{C}{\epsilon}
  \int\limits_{-3\epsilon}^{3\epsilon} | \mathcal{F}T| (y)dy\\  
  &=\frac{2C}{\epsilon}\int\limits_0^{3\epsilon} \frac{| F_\beta|}{| y|^{1+Re(\beta)}}dy
  =\frac{C_1}{\epsilon}\frac{1}{(3\epsilon)^{Re(\beta)}}
  \leq C_2| \mathcal{F}T | (x).
 \end{align*}
 \end{proof}
Applying Lemma \ref{lem.3}, then we deduce that
$$
| \mathcal{F}^{-1}f(y)\mathcal{F}T*\varphi_\epsilon(y)| \leq
C| \mathcal{F}^{-1}f(y)|| \mathcal{F}T| (y)
$$
almost everywhere.
But
\begin{align*}
 \int\limits_\mathbb{R}| \mathcal{F}^{-1}f(y)| |\mathcal{F}T| (y) dy&=
 \int\limits_\mathbb{R}\frac{e^{-|y|^\alpha}}{\sqrt{2\pi}}
 \frac{|C_\beta|}{|y|^{1+Re(\beta)}}dy
 =\frac{| C_\beta|}{\sqrt{2\pi}}\int\limits_\mathbb{R}\frac{e^{-|y|^\alpha}}{| y|^{1+Re(\beta)}}dy<\infty
\end{align*}
since $Re(\beta)\in (-1,0)$.
Applying Lebesgue dominated convergence theorem again, the right hand side of 
(\ref{eq.6}) converges to $\int\limits_\mathbb{R}\mathcal{F}^{-1}f(y)\mathcal{F}T(y)dy$.
So we get (\ref{eq.2}).
  \subsubsection{Proof of Theorem \ref{thm.2}}\label{subsubsection.4.1.4}
  Let $\chi$ be in $C_0^\infty(\mathbb{R}), \chi\geq 0,\chi(x)=1$ if
$x\in [-1,1]$, $ supp\chi\in [-2, 2]$.
For $\epsilon >0$, we define 
$$
\phi_\epsilon(x)=(1-\chi(x/\epsilon))\chi(\epsilon x).
$$
Let $\mu$ be the distribution of random vector $(U,V)$, then $\mu$ is a probability measure on $\mathbb{R}^2$.\\
Let $T_1(x)=| x|^\beta, T_2(x)=|y| ^{\overline{\beta}}$.
Following Lemma \ref{lem.1}, 
  $T_1, T_2$ are distributions and have Fourier transforms
$$
\mathcal{F}T_1(y)=\frac{C_{\beta}}{|y|^{1+\beta}},
\mathcal{F}T_2(x)=\frac{C_{\overline{\beta}}}{|x|^{1+\overline{\beta}}} ,
 $$
 respectively, in the sense of distributions, where 
$C_u=2^{u+1/2}\frac{\Gamma(\frac{u +1}{2})}{\Gamma(\frac{-u}{2})}.
$\\
Set
$
F_{1\epsilon}(x)=T_1(x)\phi_\epsilon(x), F_{2\epsilon}(y)=T_2(y)\phi_\epsilon(y).
$
It is clearly that $F_{1\epsilon}(x)\in S(\mathbb{R}), F_{2\epsilon}(y)\in S(\mathbb{R})$.\\
Then $ F_{1\epsilon}\otimes F_{2\epsilon}(x,y)\in S(\mathbb{R}^2)$.
It follows that
\begin{equation}\label{eq.7}
 \int\limits_{\mathbb{R}^2}F_{1\epsilon}\otimes F_{2\epsilon}(x,y)d\mu(x,y)=
 \int\limits_{\mathbb{R}^2}\mathcal{F}^{-1}(d\mu)(x,y)
 \mathcal{F}(F_{1\epsilon}\otimes F_{2\epsilon})(x,y)dxdy.
\end{equation}
Now we consider the right-hand side of (\ref{eq.7}).
We have
\begin{align*}
\mathcal{F}(F_{1\epsilon}\otimes F_{2\epsilon})(x,y)&
=\mathcal{F}F_{1\epsilon}\otimes \mathcal{F}F_{2\epsilon}(x,y).
\end{align*}
We can write
\[
F_{1\epsilon}(x)=T_1(x)\chi(\epsilon x)-T_1(x)\chi(\epsilon x)\chi(\frac{x}{\epsilon}).
\]
Set $\psi(x)=\mathcal{F}\chi(x)$. One has
\begin{align*}
\mathcal{F}F_{1\epsilon}(x)&=\frac{1}{\sqrt{2\pi}}\mathcal{F}T_1*\psi_\epsilon-
\frac{1}{2\pi}\mathcal{F}T_1*\psi_\epsilon *\psi_{1/\epsilon}
=\mathcal{F}T_1*(\frac{1}{\sqrt{2\pi}}\psi_\epsilon)-
\frac{1}{\sqrt{2\pi}}\mathcal{F}T_1*(\frac{1}{\sqrt{2\pi}}\psi_\epsilon) *\psi_{1/\epsilon}.
\end{align*}
We will use the following lemma.
\begin{lem}\label{lem.6}
  Let $\psi$ be a function in the Schwartz class, $T(t)=\mid t\mid^{\beta}$ where $ Re(\beta)\in (-1,0)$. Then for all $x\neq 0$, there exists a constant $C>0$ such that 
 $$
 \mid T*\psi(x) \mid \leq
  C\mid T(x)\mid.
 $$
 \end{lem}
\begin{proof}
We denote $C$ a running constant which may change from an occurrence to another occurrence. For $\epsilon>0$, set
 $$\psi_\epsilon(x)=\frac{1}{\epsilon}\psi(\frac{x}{\epsilon}).$$
We first prove that there exists a constant $C>0$ such that for all $\epsilon>0$,
  $$
 \mid T*\psi_\epsilon(x) \mid \leq C \sup_{a>0}\frac{1}{2a}\int_{x-a}^{x+a}\mid T(t) \mid dt.
 $$
Let
$k(y)=\mid T(x-y) \mid,I_\epsilon= T*\psi_\epsilon(x)$.\\
By taking the change of variable $u=\frac{y}{\epsilon}$, we obtain that 
$I_\epsilon =\int\limits_{\mathbb {R}} k(\epsilon u) \psi(u) du.$
Set $F(x)=\int\limits_0^xk(\epsilon u) du$, one has 
$$F(x)=\frac{1}{\epsilon}\int_0^{\epsilon x} k(t) dt,
F'(x)=k(\epsilon x).$$
Combining with the fact that
$\lim\limits_{x\rightarrow \infty}F(x)\psi(x)=0$
and $F(0)=0$, we deduce that 
\begin{align*}
 \int\limits_0^{+\infty} k(\epsilon u) \psi(u) du&=\int_0^{+\infty}F'(u)\psi(u) du
 =-\int\limits_0^{+\infty}F(u)\psi'(u) du\\
 &=-\int\limits_0^{+\infty}\left\{\frac{1}{\epsilon u}\int\limits_0^{\epsilon u} k(t)dt\right\}u\psi'(u) du.
\end{align*}
Since $k(u)\geq 0$, it follows that
$$
\frac{1}{\epsilon u}\int\limits_0^{\epsilon u} k(t)dt\leq 
\sup_{a>0}\frac{1}{a}\int\limits_{-a}^{+a} k(t) dt.
$$
We also have $\psi $ is a function in the Schwartz class, then  
$$ |\int\limits_0^{+\infty} k(\epsilon u) \psi(u) du| \leq |\int\limits_0^{+\infty} u\psi'(u) du|
\sup_{a>0}\frac{1}{a}\int_{-a}^{+a} k(t) dt=C\sup_{a>0}\frac{1}{2a}\int\limits_{-a}^{+a} k(t) dt.
$$
We can get a similar bound for the integral  $ |\int\limits_{-\infty}^0 k(\epsilon u) \psi(u) du| $.
Therefore we obtain
$$|I_\epsilon |\leq C \sup_{a>0}\frac{1}{2a}\int\limits_{x-a}^{x+a}\mid T(t) \mid dt.$$
Taking $\epsilon = 1$, it follows that 
$$
\mid T*\psi(x) \mid \leq C \sup_{a>0}\frac{1}
 {2a}\int\limits_{x-a}^{x+a}\mid T(t) \mid dt.
$$
Now we will prove that there exists a constant $C>0$ such that for all $a>0$, then 
$$
\frac{1}
 {2a}\int\limits_{x-a}^{x+a}\mid T(t) \mid dt\leq C|T(x)|.
$$
 We first consider the case $x>0$.\\
 If $x>2a$, then $0<\frac{x}{2}<x-a<x+a$ and $T(t)$ decreases over $[x-a,x+a]$. We get
\begin{align*}
\frac{1}{2a}\int\limits_{x-a}^{x+a}\mid T(t) \mid dt&\leq \frac{1}{2a}\left(x+a-(x-a)\right)(x-a)^{Re(\beta)}\\
&\leq \frac{(x/2)^{Re(\beta)}}{2}=C\mid T(x)\mid.
\end{align*}
If $0<x\leq 2a<3a  $, then
\begin{align*}
 \frac{1}{2a}\int\limits_{x-a}^{x+a}\mid T(t) \mid &\leq\frac{1}{2a}\int\limits_{-3a}^{3a}\mid T(t) \mid dt\\
 &=\frac{1}{a}\int_{0}^{3a} t^{Re(\beta)}dt
 =\frac{(3a)^{1+Re(\beta)}}{a(1+Re(\beta))}\\
& \leq C(3a)^{Re(\beta)}\leq Cx^{Re(\beta)}=C|T(x)|.
\end{align*}
For the case $x<0$, if $ x\leq -2a $, then $x-a<x+a<x/2<0$, we obtain
\begin{align*}
\frac{1}{2a}\int\limits_{x-a}^{x+a}\mid T(t) \mid&\leq \frac{(x+a-(x-a))|x+a|^{Re(\beta)}}{2a}\\
&\leq \mid x/2\mid^{Re(\beta)}=C\mid T(x)\mid.
\end{align*}
If $-2a< x<0$, then $-3a<x-a<x+a<3a$, one gets
\begin{align*}
\frac{1}{2a}\int\limits_{x-a}^{x+a}\mid T(t) \mid &\leq \frac{1}{2a}\int\limits_{-3a}^{3a}\mid T(t)\mid dt\\
&=
\frac{1}{a}\int\limits_0^{3a}t^{Re(\beta)}\leq C\mid x\mid^{Re(\beta)}=C\mid T(x)\mid.
\end{align*}
One can therefore obtain the conclusion.
\end{proof}
Since 
$\psi_\epsilon*\psi_{1/\epsilon}\in S(\mathbb{R})$, following Lemma \ref{lem.6}, we have
$$
|\mathcal{F}T_1*(\psi_\epsilon *\psi_{1/\epsilon})(x)| \leq C | \mathcal{F}T_1(x)|. 
$$
Then, there exists a constant $C>0$ such that
$$| \mathcal{F}F_{1\epsilon}(x)| \leq C | \mathcal{F} T_1(x)|.
$$
In a similar way, we also get
$
| \mathcal{F}F_{2\epsilon}(y)| \leq C |\mathcal{F} T_2(y)|.
$
It follows that 
$$|\mathcal{F}^{-1}(d\mu)(x,y)
 \mathcal{F}(F_{1\epsilon}\otimes F_{2\epsilon})(x,y)| 
 \leq C | \mathcal{F}^{-1}(d\mu)(x,y)| | \mathcal{F} T_1(x) \mathcal{F} T_2(y)|.
$$
Let us recall that 
$\int\limits_{\mathbb{R}}\frac{\psi(t)}{\sqrt{2\pi}}dt=\chi(0)=1.$
We will use the two following lemmas to
get
 \begin{align}\label{eq.a}
\lim_{\epsilon\rightarrow 0}\mathcal{F}F_{1\epsilon}(x)=
\mathcal{F}T_1(x), \lim\limits_{\epsilon\rightarrow 0}\mathcal{F}F_{2\epsilon}(y)=
\mathcal{F}T_2(y).
\end{align}
\begin{lem}\label{lem.7}
 Let $T(x)=\mid x\mid^{\beta}$ where $Re(\beta)\in (-1,0)$, $\psi$ be a function in Schwartz class. Then almost everywhere, 
 $$
 \lim\limits_{\epsilon\rightarrow 0}T*\psi_{1/\epsilon}(x)=0.
 $$
 where $\psi_{1/\epsilon}(x)=\epsilon \psi(\epsilon x)$.
\end{lem}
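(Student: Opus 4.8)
The plan is to make the convolution explicit by unwinding the definition of $\psi_{1/\epsilon}$ and then to exploit that, as $\epsilon\to 0$, the kernel $\psi_{1/\epsilon}$ spreads out so that the singularity of $T$ is pushed off to infinity. Recalling from Lemma \ref{lem.6} that $\psi_\epsilon(x)=\frac1\epsilon\psi(\frac x\epsilon)$, one has $\psi_{1/\epsilon}(y)=\epsilon\,\psi(\epsilon y)$, and the substitution $t=\epsilon y$ gives
\begin{equation*}
T*\psi_{1/\epsilon}(x)=\int_{\mathbb{R}}|x-y|^{\beta}\,\epsilon\,\psi(\epsilon y)\,dy=\int_{\mathbb{R}}\left|x-\frac{t}{\epsilon}\right|^{\beta}\psi(t)\,dt.
\end{equation*}
Writing $r=Re(\beta)\in(-1,0)$, I would record that $\bigl||x-t/\epsilon|^{\beta}\bigr|=|x-t/\epsilon|^{r}$, so only the real exponent controls the modulus. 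Since $r>-1$ makes the singularity locally integrable and $\psi$ decays rapidly, the integral is finite for every $x$, so the convergence will in fact hold for every $x$, which is stronger than the almost-everywhere conclusion sought.

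Next I would split the integral according to the moving singularity located at $t=\epsilon x$, writing the domain as $\{|x-t/\epsilon|\le 1\}$ and its complement. On the near region, I would bound $|\psi|$ by $\|\psi\|_\infty$ and substitute $s=t/\epsilon-x$ to obtain
\begin{equation*}
\int_{|x-t/\epsilon|\le 1}|x-t/\epsilon|^{r}|\psi(t)|\,dt\le \|\psi\|_\infty\,\epsilon\int_{|s|\le 1}|s|^{r}\,ds=\frac{2\|\psi\|_\infty}{1+r}\,\epsilon\xrightarrow[\epsilon\to0]{}0,
\end{equation*}
the integral over $|s|\le 1$ being finite precisely because $r>-1$.

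On the far region $\{|x-t/\epsilon|>1\}$, I would invoke dominated convergence in the variable $t$: for fixed $t\neq 0$ and $x$, as $\epsilon\to 0$ one has $|x-t/\epsilon|\to+\infty$, hence $|x-t/\epsilon|^{r}\to 0$ since $r<0$; moreover on this region $|x-t/\epsilon|^{r}\le 1$, so the integrand is dominated by $|\psi(t)|\in L^1(\mathbb{R})$ uniformly in $\epsilon$. Lebesgue's theorem then forces this contribution to $0$, and combining the two regions yields $T*\psi_{1/\epsilon}(x)\to 0$.

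The only delicate point is the singularity at $t=\epsilon x$, which drifts toward $0$ as $\epsilon\to0$ and so prevents a single $\epsilon$-free dominating function on the whole line. Isolating it in the shrinking window $\{|x-t/\epsilon|\le1\}$, where its contribution is $O(\epsilon)$ thanks to the scaling $dt=\epsilon\,ds$, is exactly what circumvents this obstacle and lets the dominated convergence argument run cleanly on the complement.
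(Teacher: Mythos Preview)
Your argument is correct. The paper's proof is organized differently: it writes the convolution in the other order, $T*\psi_{1/\epsilon}(x)=\int_{\mathbb R}|y|^{\beta}\,\epsilon\,\psi(\epsilon(x-y))\,dy$, substitutes $t=\epsilon y$ to pull out a global factor $\epsilon^{-Re(\beta)}$, and then shows the remaining integral $\int_{\mathbb R}|t|^{Re(\beta)}|\psi(\epsilon x-t)|\,dt$ is bounded uniformly in $\epsilon$ by splitting $\{|t|\le1\}$ (controlled by $\|\psi\|_\infty$) versus $\{|t|\ge1\}$ (controlled by $\|\psi\|_1$). This yields the explicit rate $|T*\psi_{1/\epsilon}(x)|\le C\,\epsilon^{-Re(\beta)}$ without invoking dominated convergence. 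Your route instead keeps the singularity in $T(x-\cdot)$, tracks the moving point $t=\epsilon x$, and uses a near/far split: the near window gives $O(\epsilon)$ by scaling, and the far region is handled by Lebesgue's theorem with majorant $|\psi|$. Both are sound; the paper's version is slightly more elementary and gives a uniform rate, while yours makes the geometric picture (spreading kernel, drifting singularity) more transparent and, as you note, even delivers the limit at every $x$.
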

\begin{proof}
Let $x\in\mathbb{R}, x\neq 0$, we have
$$
T*\psi_{1/\epsilon}(x)=\int\limits_{\mathbb{R}}T(y)\epsilon\psi(\epsilon(x-y))dy=
\int\limits_{\mathbb{R}}\mid y\mid^{\beta}\epsilon\psi(\epsilon(x-y))dy.
$$
By taking the change of variable $t=\epsilon y$, one gets
$$
\mid T*\psi_{1/\epsilon}(x)\mid\leq 
\int\limits_{\mathbb{R}}\mid t/\epsilon\mid^{Re(\beta)}\mid \psi(\epsilon x-t)\mid dt
=\epsilon^{-Re(\beta)}\int\limits_{\mathbb{R}}\mid t\mid^{Re(\beta)}\mid \psi(\epsilon x-t)\mid dt.
$$
We write
$$
\int\limits_{\mathbb{R}}\mid t\mid^{Re(\beta)}\mid \psi(\epsilon x-t)\mid dt=
\int\limits_{-1}^1\mid t\mid^{Re(\beta)}\mid \psi(\epsilon x-t)\mid dt
+\int\limits_{\mid t\mid \geq 1}\mid t\mid^{Re(\beta)}\mid \psi(\epsilon x-t)\mid dt:=I_1+I_2.
$$
We consider $I_1$ and $I_2$. Since $\psi$ is a function in Schwartz class, one gets
$\mid\mid \psi\mid\mid_{\infty}<\infty, \mid\mid \psi\mid\mid_1<\infty$. Then
$$
I_1\leq 2\mid\mid \psi\mid\mid_{\infty}\int\limits_0^1t^{Re(\beta)}dt=C<+\infty.
$$
$$
I_2=\int\limits_{\mid t\mid \geq 1}\mid t\mid^{Re(\beta)}\mid \psi(\epsilon x-t)\mid dt
\leq \int\limits_{\mid t\mid \geq 1}\mid \psi(\epsilon x-t)\mid dt\leq \mid\mid \psi \mid\mid_1=C<+\infty.
$$
Since then
$
\mid T*\psi_{1/\epsilon}(x)\mid \leq C\epsilon^{-Re(\beta)}\rightarrow 0
$
as $\epsilon\rightarrow 0$. It follows that $T*\psi_{1/\epsilon}(x)\rightarrow 0$ 
almost everywhere as $\epsilon\rightarrow 0$.
\end{proof}
\begin{lem}\label{lem.8}
Let $\psi$ be a function in Schwartz class such that $\int\limits_{\mathbb{R}}\psi(t) dt=1$,
$T(t)=\mid t\mid^{\beta }$ where $Re(\beta)\in(-1,0)$. Then we have $\lim\limits_{\epsilon\rightarrow 0}T*\psi_\epsilon(x)=T(x)$
 almost everywhere, where $\psi_\epsilon(x)=\frac{\psi(x/\epsilon )}{\epsilon}$.
\end{lem}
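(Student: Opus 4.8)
The plan is to exploit the normalization $\int_{\mathbb{R}}\psi=1$, which is inherited by $\psi_\epsilon$ under the scaling $\psi_\epsilon(x)=\frac1\epsilon\psi(x/\epsilon)$, so that for $x\neq 0$ one may write
$$
T*\psi_\epsilon(x)-T(x)=\int_{\mathbb{R}}\bigl[T(x-y)-T(x)\bigr]\psi_\epsilon(y)\,dy=\int_{\mathbb{R}}\bigl[T(x-\epsilon t)-T(x)\bigr]\psi(t)\,dt,
$$
the last equality coming from the change of variable $y=\epsilon t$. Lemma \ref{lem.6} guarantees that $T*\psi_\epsilon(x)$ is finite whenever $x\neq 0$, so this manipulation is licit. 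Since $T(t)=|t|^\beta$ is continuous away from the origin, for each fixed $t$ the integrand $T(x-\epsilon t)-T(x)$ tends to $0$ as $\epsilon\to0$; the whole issue is to pass this pointwise limit under the integral sign. As it suffices to prove convergence for every $x\neq 0$ (the origin being a single null point), I fix such an $x$ and set $\delta=|x|/2>0$, splitting the $y$-integral into a near region $|y|<\delta$ and a far region $|y|\ge\delta$.

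First I would treat the near region, where $|x-y|>\delta$ keeps us away from the singularity of $T$, so that $T$ is continuous there and the classical approximate-identity estimate applies. Given $\eta>0$, choose $\delta'\in(0,\delta)$ with $|T(x-y)-T(x)|<\eta$ for $|y|<\delta'$; then the part of the integral over $|y|<\delta'$ is bounded by $\eta\|\psi\|_1$, while the part over $\delta'\le|y|<\delta$ is bounded by $\sup_{\delta'\le|y|\le\delta}|T(x-y)-T(x)|$ times $\int_{|y|\ge\delta'}|\psi_\epsilon|=\int_{|u|\ge\delta'/\epsilon}|\psi|$, which vanishes as $\epsilon\to0$ because $\psi\in L^1$. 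Letting $\eta\to0$ shows that the near-region contribution tends to $0$.

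The main obstacle is the far region, precisely because there $T(x-y)$ may be evaluated arbitrarily close to its singularity (when $y\approx x$), so there is no single $\epsilon$-uniform dominating function and a naive dominated convergence fails; the remedy is the rapid decay of $\psi$. The term carrying $T(x)$ is harmless, since $|T(x)|\int_{|y|\ge\delta}|\psi_\epsilon|=|T(x)|\int_{|u|\ge\delta/\epsilon}|\psi|\to0$. For the term carrying $T(x-y)$, I would use that $\psi$ is Schwartz to write $|\psi_\epsilon(y)|\le C_N\epsilon^{N-1}|y|^{-N}$ for $|y|\ge\delta$, whence
$$
\int_{|y|\ge\delta}|x-y|^{Re(\beta)}|\psi_\epsilon(y)|\,dy\le C_N\,\epsilon^{N-1}\int_{|y|\ge\delta}|x-y|^{Re(\beta)}|y|^{-N}\,dy.
$$
The remaining integral is finite for $N\ge2$: its only local singularity is at $y=x$, which is integrable because $Re(\beta)>-1$, and the decay $|y|^{Re(\beta)-N}$ at infinity is integrable as well. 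Choosing $N\ge2$ makes this contribution $O(\epsilon^{N-1})$, hence it tends to $0$. Combining the near- and far-region estimates yields $T*\psi_\epsilon(x)\to T(x)$ for every $x\neq0$, that is, almost everywhere, which is the assertion of the lemma.
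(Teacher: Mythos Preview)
Your proof is correct and follows essentially the same approach as the paper: write $T*\psi_\epsilon(x)-T(x)=\int[T(x-y)-T(x)]\psi_\epsilon(y)\,dy$, split into a near region where the continuity of $T$ at $x\neq0$ controls the integrand, and a far region where the Schwartz decay of $\psi$ compensates for the singularity of $T$. The only differences are cosmetic: the paper fixes $\delta$ directly from the target accuracy $\theta$ (so a single near-region estimate suffices), and in the far region it changes variables $t=y/\epsilon$, uses only the bound $|\psi(t)|\le C/t^2$, and splits according to whether $|x-\epsilon t|\le1$ or $\ge1$; your single integrability estimate $\int_{|y|\ge\delta}|x-y|^{Re(\beta)}|y|^{-N}\,dy<\infty$ is a slightly cleaner packaging of the same computation.
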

\begin{proof}
Let $x\in\mathbb{R}, x\neq 0$, we consider
$
I=T*\psi_\epsilon(x)-T(x).
$
Let us recall that 
$$\int\limits_{\mathbb{R}}\psi_\epsilon(t)dt=
\int\limits_{\mathbb{R}}\frac{\psi(t/\epsilon)}{\epsilon}dt=
\int\limits_{\mathbb{R}}\psi(u)du=1.
$$  
Therefore 
$
I=\int\limits_{\mathbb{R}}\left\{T(x-y)-T(x) \right \}\frac{1}{\epsilon}\psi(y/\epsilon)dy.
$\\
Let $\theta>0$ be a constant. There exists $0<\delta<\mid x\mid$ such that for
$\mid y\mid \leq \delta$, we have $\mid T(x-y)-T(x)\mid \leq \frac{\theta}{2\mid \mid \psi\mid\mid_1}$.
Then
$$
\mid I\mid\leq \int\limits_{\mid y\mid\leq \delta}\mid T(x-y)-T(x)\mid\frac{\mid\psi(y/\epsilon)\mid}{\epsilon}dy
+\int\limits_{\mid y\mid\geq \delta}\mid T(x-y)-T(x)\mid\frac{\mid\psi(y/\epsilon)\mid}{\epsilon}dy
:=I_1+I_2.
$$
$$
I_1\leq \frac{\theta}{2\epsilon\mid\mid\psi\mid\mid_1}
\int\limits_{\mid y\mid\leq \delta}\mid \psi(y/\epsilon)\mid dy
= \frac{\theta}{2\epsilon\mid\mid\psi\mid\mid_1}
\int\limits_{\mid u\mid\leq \epsilon\delta}\epsilon\mid \psi(u)\mid du
\leq \frac{\theta}{2\epsilon\mid\mid\psi\mid\mid_1}
\int\limits_{\mathbb{R}}\epsilon\mid \psi(u)\mid du=\frac{\theta}{2}.
$$
Now we consider $I_2$. Since $\psi$ is a function in Schwartz class, there exists a constant $C>0$ 
such that for $\mid t\mid\geq 1$ then $\mid \psi(t)\mid \leq \frac{C}{t^2}$.\\
We choose $\epsilon>0$ such that $\frac{\delta}{\epsilon}\geq 1$. By taking the  change of variable $t=y/\epsilon$, 
we get
\begin{align*}
 I_2&=\int\limits_{\mid t\mid\geq \frac{\delta}{\epsilon}}\mid T(x-\epsilon t)-T(x)\mid \mid \psi (t)\mid dt
 \leq \int\limits_{\mid t\mid\geq \frac{\delta}{\epsilon}}\mid T(x-\epsilon t)\mid \mid \psi (t)\mid dt
 +\int\limits_{\mid t\mid\geq \frac{\delta}{\epsilon}}\mid T(x)\mid \mid \psi (t)\mid dt\\
 &= \int\limits_{\mid t\mid\geq \frac{\delta}{\epsilon},\mid x-\epsilon t \mid \leq 1 }\mid T(x-\epsilon t)\mid \mid \psi (t)\mid dt+
 \int\limits_{\mid t\mid\geq \frac{\delta}{\epsilon}, \mid x-\epsilon t \mid \geq 1}\mid T(x-\epsilon t)\mid \mid \psi (t)\mid dt+
 \int\limits_{\mid t\mid\geq \frac{\delta}{\epsilon}}\mid T(x)\mid \mid \psi (t)\mid dt\\
 &:=J_1+J_2+J_3.
 \end{align*}
  We have 
 $$
 J_1=\int\limits_{\mid t\mid\geq \frac{\delta}{\epsilon},\mid x-\epsilon t \mid \leq 1 }\mid T(x-\epsilon t)\mid \mid \psi (t)\mid dt
 \leq C\int\limits_{\mid x-\epsilon t\mid \leq 1}\mid T(x-\epsilon t)\mid (\epsilon/\delta)^2 dt.
 $$
 By taking the change of variable $u=\epsilon t-x$, one gets
 $$
 J_1\leq \frac{C\epsilon^2}{\delta^2}\int\limits_{\mid u\mid \leq 1}\frac{\mid T(u)\mid}{\epsilon} du=C_1\epsilon.
 $$
 Here $C_1$ is a constant depending on $\delta$.\\
 Let us consider $J_2$. Since $\mid T(t)\mid =\mid t\mid^{Re(\beta)}$ and $Re(\beta)\in (-1,0)$, 
 if $\mid x-\epsilon t\mid \geq 1 $ we get
 $\mid T(x-\epsilon t)\mid \leq 1$. \\
 Moreover $\delta/\epsilon\geq 1$, it follows that
 $$
 J_2\leq \int\limits_{\mid t\mid \geq \delta/\epsilon}\frac{C}{t^2}dt=C_2\frac{\epsilon}{\delta}
 $$
 where $C_2$ is a constant depending on $\delta$.
Similarly, since $\delta/\epsilon\geq 1$, we get
$$
J_3\leq \mid T(x)\mid \int\limits_{\mid t\mid \geq \delta/\epsilon}\frac{C}{t^2}dt=C_3\epsilon
$$
where $C_3$ is a constant depending on $x, \delta$.
So we get $I_2\leq C\epsilon$ where $C$ is a constant depending on $x, \delta$.
We can choose $\epsilon$ small enough to get $I_2\leq \frac{\theta}{2}$.\\
Then for all $\theta>0$, there exists $\epsilon_0$ such that for all $0<\epsilon<\epsilon_0$, 
we have $\mid I\mid \leq \theta $.
Therefore we get the conclusion.
\end{proof}
From (\ref{eq.a}), one gets
$$
\lim\limits_{\epsilon\rightarrow 0}\mathcal{F}(F_{1\epsilon}\otimes F_{2\epsilon})(x,y)=
\mathcal{F}T_1(x)\mathcal{F}T_2(y)
=\frac{C_\beta C_{\overline{\beta}}}{\mid x\mid^{1+\beta}\mid y\mid ^{1+\overline{\beta}}}.
$$
Moreover
$
\mathcal{F}^{-1}(d\mu)(x,y)=\frac{\mathbb{E}e^{ixU+iyV}}{2\pi}.
$ We use Theorem \ref{thm.1}, Lemma \ref{lem.0} and the following lemma to deduce that
$$
\int\limits_{\mathbb{R}^2}\frac{|\mathcal{F}^{-1}(d\mu)(x,y)|}{\mid x\mid^{1+Re(\beta)}\mid y\mid ^{1+Re(\overline{\beta})}}dxdy=
\int\limits_{\mathbb{R}^2}\frac{\mathbb{E}e^{ixU+iyV}}{2\pi\mid x\mid^{1+Re(\beta)}\mid y\mid ^{1+Re(\overline{\beta})}}dxdy <+\infty.
$$
\begin{lem}\label{lem.5}
  Set 
  \begin{align*}
  M_{U,V}(x,y)&=\mathbb{E}e^{ixU+iyV}-\mathbb{E}e^{ixU}\mathbb{E}e^{iyV},
  I=\int\limits_{\mathbb{R}^2}\frac{\mid M_{U,V}(x,y)\mid }{\mid xy\mid^{1+Re(\beta)}}dxdy.\\
  [U,V]_2&=\int\limits_S |h(s) g(s)|^{\alpha/2}\leq \eta<1,
  \end{align*}
  where $U, V$ are defined as in Theorem \ref{thm.2}.
  Then
  $
  I\leq C(\eta) [U,V]_2<\infty,
  $
  where the constant $C(\eta)$ depends on $\eta$.
 \end{lem}
  \begin{proof}
We just need to consider the integral only over $(0,+\infty)\times (0, +\infty)$. We divide this domain
into four regions $(0,1) \times (0,1), (0,1)\times (1,+\infty), 
(1,+\infty)\times (0,1), (1,+\infty)\times (1,+\infty)$ and let $I_1, I_2, I_3, I_4$ 
be the integrals over those domains, respectively.\\
Over $(0,1) \times (0,1)$, by using inequality (3.4) in \cite{Pipiras2007} 
we get
\begin{align*}
 I_1&\leq \int\limits_0^1\int\limits_0^1 \frac{\mid M_{U,V}(x,y)\mid }{\mid xy\mid^{1+Re(\beta)}}dxdy
 \leq 2 \int\limits_0^1\int\limits_0^1  (xy)^{\alpha/2-1-Re(\beta)}dxdy [U,V]_2=C[U,V]_2.
\end{align*}
Over $(1,+\infty)\times (1,+\infty)$, by using inequality (3.6) in \cite{Pipiras2007} 
and assumptions 
$$
\mid\mid U\mid\mid_\alpha^\alpha=1,  \mid\mid V\mid\mid_\alpha^\alpha=1, [U,V]_2=\int\limits_S\mid f(s)g(s)\mid^{\alpha/2}\leq \eta<1,
$$
we can bound the integral over this domain by
$$
I_2\leq 2 \int\limits_1^{+\infty}\int\limits_1^{+\infty}  (xy)^{\alpha/2-1-Re(\beta)}e^{-2(1-\eta)(xy)^{\alpha/2}}dxdy [U,V]_2.
$$
Here we can bound $e^{-2(1-\eta)(xy)^{\alpha/2}}$ up to a constant depending on $\eta$ by $(xy)^{-p}$ 
for arbitrarily large $p>0$. \\
So 
$
I_2\leq C(\eta) [U,V]_2.
$
Over $(0,1)\times (1,+\infty)$, by using inequality (3.5) in \cite{Pipiras2007} 
we obtain a bound
\begin{align*}
I_3&\leq 2 \int\limits_0^1\int\limits_1^{+\infty}  (xy)^{\alpha/2-1-Re(\beta)}e^{-(x^{\alpha/2}-y^{\alpha/2})^2}dxdy [U,V]_2\\
&\leq 2 \int\limits_0^1\int\limits_1^{+\infty}  (xy)^{\alpha/2-1-Re(\beta)}e^{-(y^{\alpha/2}-1)^2}dxdy [U,V]_2\\
&\leq C\int\limits_1^{+\infty} (xy)^{\alpha/2-1-Re(\beta)}e^{-(y^{\alpha/2}-1)^2}dxdy [U,V]_2\\
&=C\int\limits_1^{+\infty}y^{\alpha/2-1-Re(\beta)}e^{-(y^{\alpha/2}-1)^2}dy [U,V]_2.
\end{align*}
Since
$
\int\limits_1^{+\infty}y^{\alpha/2-1-Re(\beta)}e^{-(y^{\alpha/2}-1)^2}dy<+\infty,
$
we get $I_3\leq C[U,V]_2$.\\
A similar bound holds for $I_4$. Then we have the conclusion.
\end{proof}
By Lebesgue dominated convergence theorem, as $\epsilon \rightarrow 0$, the right-hand side of (\ref{eq.7}) converges to 
$$
\frac{C_\beta C_{\overline{\beta}}}{2\pi}
 \int\limits_{\mathbb{R}^2}\frac{\mathbb{E} e^{ixU+iyV}}{\mid x\mid^{1+\beta}\mid y\mid^{1+\overline{\beta}}}dxdy.
$$
Now we consider the left-hand side of (\ref{eq.7}). Since 
$\lim\limits_{\epsilon\rightarrow 0}\phi_\epsilon(x)=1$ 
for all $x\in \mathbb{R}$, it follows  
$$
\lim\limits_{\epsilon\rightarrow 0}F_{1\epsilon}(x)F_{2\epsilon}(y)=\mid x\mid^{\beta}\mid y\mid^{\overline{\beta}}
$$
for all $x\neq 0, y\neq 0$.\\
It is clear that $\mid F_{1\epsilon}(x)\mid \leq C \mid x\mid ^{Re(\beta)},
\mid F_{2\epsilon}(y)\mid \leq C \mid y\mid ^{Re(\overline{\beta})}$. Moreover
$$
\int\limits_{\mathbb{R}^2}\mid x\mid ^{Re(\beta)}\mid y\mid ^{Re(\overline{\beta})}d\mu(x,y)= 
\left| \mathbb{E}|U|^\beta|V|^{\overline{\beta}} \right|
\leq \sqrt{\mathbb{E}|U|^{2\beta}\mathbb{E}|V|^{2\overline{\beta}}}<+\infty
$$
since $Re(\beta)\in (-1/2,0)$.\\
We can therefore apply Lebesgue dominated convergence theorem for the left-hand side of (\ref{eq.7}). As $\epsilon\rightarrow 0$, it converges to
$$\int\limits_{\mathbb{R}^2}\mid x\mid ^{\beta}\mid y\mid ^{\overline{\beta}}d\mu(x,y)=
\mathbb{E}\mid U\mid^\beta \mid V\mid^{\overline{\beta}}.
$$
This proves the result (\ref{eq.3}). Now we prove (\ref{eq.3.1}).\\
Following Theorem \ref{thm.1} and (\ref{eq.3}), 
 for $-1/2<Re(\beta)<0$, we get
 \begin{align*}
\mathbb{E}\mid U\mid^\beta &=\frac{C_\beta}{\sqrt{2\pi}} \int\limits_{\mathbb{R}}
\frac{\mathbb{E}e^{ixU}}{\mid x\mid^{1+\beta}}dx,
\mathbb{E}\mid V\mid^{\overline{\beta}} =\frac{C_{\overline{\beta}}}{\sqrt{2\pi}}\int\limits_{\mathbb{R}}
\frac{\mathbb{E}e^{iyV}}{\mid y\mid^{1+\overline{\beta}}}dy\\
\mathbb{E}\mid U\mid^\beta\mid V\mid^{\overline{\beta}}&=\frac{C_{\beta} C_{\overline{\beta}}}{2\pi}\int\limits_{\mathbb{R}^2}
\frac{\mathbb{E}e^{ixU+iyV}}{\mid x\mid^{1+\beta}\mid y\mid^{1+\overline{\beta}}}dxdy.
\end{align*}
Then 
\begin{align*}
 \left|cov(\mid U\mid^\beta,\mid V\mid^\beta)\right|&=
 \left|\mathbb{E}\mid U\mid^\beta\mid V\mid^{\overline{\beta}}-
 \mathbb{E}\mid U\mid^\beta \mathbb{E}\mid V\mid^{\overline{\beta}}\right|
 =\left|C_{\beta}C_{\overline{\beta}} \int\limits_{\mathbb{R}^2}\frac{\mathbb{E}e^{ixU+iyV}-
 \mathbb{E}e^{ixU}\mathbb{E}e^{iyV}}{\mid x\mid^{1+\beta}\mid y\mid^{1+
 \overline{\beta}}}dx dy\right|\\
 &\leq \mid C_\beta C_{\overline{\beta}}\mid \int\limits_{\mathbb{R}^2}\frac{\left|\mathbb{E}e^{ixU+iyV}-
\mathbb{E}e^{ixU}\mathbb{E}e^{iyV}\right|}{\mid xy\mid^{1+Re(\beta)}} dx dy .
 \end{align*}
  Applying Lemma \ref{lem.5}, we obtain (\ref{eq.3.1}).
  \subsubsection{Proof of Lemma \ref{lem.01}}\label{subsubsection.4.1.5}
For the case $\alpha=2$, let $Y$ be a standard $S2S$ variable. Then for $-1<Re(\beta)<0$, we have  
  \begin{align*}
\mathbb{E}\mid Y \mid^\beta &=\frac{1}{2\sqrt{\pi}}\int\limits_{-\infty}^{+\infty}\mid x\mid^\beta
e^{-\frac{x^2}{4}}dx
=\frac{1}{\sqrt{\pi}}\int\limits_0^{+\infty}x^\beta e^{-x^2/4}dx
=\frac{2^\beta}{\sqrt{\pi}}\cdot 
\Gamma(\frac{\beta+1}{2}).
 \end{align*}
  Let us now consider the case $\alpha\neq 2$.
Following (\ref{eq.4}) and Theorem \ref{thm.1}, we have 
\begin{align*}
 \mathbb{E}\mid X\mid^\beta&=
 \frac{C_{\beta}}{\sqrt{2\pi}}\int\limits_\mathbb{R}\frac{\mathbb{E}e^{iXy}}{|y|^{\beta+1}}dy
 =\frac{C_{\beta}}{\sqrt{2\pi}}\int\limits_\mathbb{R}\frac{e^{-|y|^{\alpha}}}{|y|^{\beta+1}}dy
 =\frac{2C_{\beta}}{\sqrt{2\pi}}\int\limits_0^{+\infty}\frac{e^{-y^{\alpha}}}{y^{\beta+1}}dy.
 \end{align*}
  By making the change of variable $y^{\alpha}=t$, then 
 \begin{align*}
 \mathbb{E}\mid X\mid^\beta&=
 \frac{2C_{\beta}}{\sqrt{2\pi}}\int\limits_0^{+\infty}t^{-\beta/\alpha-1}e^{-t}dt
 =\frac{2C_{\beta}}{\sqrt{2\pi}}\int\limits_0^{+\infty}t^{-\beta/\alpha-1}e^{-t}dt
 =\frac{\sqrt{2}C_{\beta}\Gamma(-\beta/\alpha)}{\alpha\sqrt{\pi}}.
 \end{align*}
 Since $\Gamma(x+1)=x\Gamma(x)$, one gets 
 $$\mathbb{E}\mid X \mid^\beta=\frac{2^{\beta}\Gamma(\frac{\beta+1}{2})\Gamma(1-\frac{\beta}{\alpha})}
{\sqrt{\pi}\Gamma(1-\frac{\beta}{2})}.$$
\subsubsection{Proof of Lemma \ref{lem.9}}\label{subsubsection.4.1.6}
 From the fact that $X$ is a $S\alpha S$ process, one can write 
 $$
 \triangle_{0,1}X=\sum_{k=0}^Ka_kX(k)\stackrel{(d)}{=}\sigma Y
 $$
 where $\sigma>0 $ and  $Y$ is a standard $S \alpha S $ random variable.
 Then
 $
 \mathbb{E}\mid \triangle_{0,1} X \mid^\beta=\sigma^\beta \mathbb{E}\mid Y\mid^\beta$. Following Theorem \ref{thm.1}, since there doesn't exist any $x\in\mathbb{C}$ such that $\Gamma(x)=0$,
 we deduce that $\mathbb{E}\mid Y\mid^\beta\neq 0$.\\
 Thus $
 \mathbb{E}\mid \triangle_{0,1}X\mid^\beta \neq 0.
 $
\subsection{Proof of Theorem \ref{thm.5}}\label{subsection.4.2}
\begin{proof}[\bf{Proof of Theorem \ref{thm.5}}]
In this proof, we shall denote by $C$ a generic constant which may change from occurrence to occurrence.\\
 We will prove that 
 $W_n (\beta)-\mathbb{E}|\triangle_{0,1}X|^\beta=O_\mathbb{P}(b_n)$ where $b_n$ is defined by (\ref{eq.9.2}).\\
 Indeed, from Lemma \ref{lem.0}, it follows that $\mathbb{E}|\triangle_{0,1}X|^\beta<+\infty$.\\
 Because of $H$-self similarity and stationary increment properties of $X$, one has
\begin{align*}
\triangle_{p,n}X&=\sum_{k=0}^Ka_kX(\frac{k+p}{n})\stackrel{(d)}{=} \sum_{k=0}^{K}\frac{a_k}{n^H}X(k+p)
=\sum_{k=0}^{K}\frac{a_k}{n^H}(X(k+p)-X(p))\stackrel{(d)}{=} \sum_{k=0}^{K}\frac{a_k}{n^H}X(k)=\frac{\triangle_{0,1}X}{n^H}.
\end{align*}
We get  
$\mathbb{E}|\triangle_{p,n}X|^\beta=
\frac{\mathbb{E}|\triangle_{0,1}X|^{\beta}}{n^{\beta H}}
$
and
$\mathbb{E}W_n(\beta)=\mathbb{E}|\triangle_{0,1}X|^\beta.$ Now we will prove that 
$W_n(\beta)\xrightarrow{(\mathbb P)} \mathbb{E}|\triangle_{0,1}X|^\beta$ when $n\rightarrow \infty$.\\
We have
 \begin{align*}
\mathbb{E}|W_n(\beta)|^2&=
\frac{n^{2\beta H}}{(n-K+1)^2}\sum_{p,p'=0}^{n-K}\mathbb{E}|
\triangle_{p,n}X|^\beta|\triangle_{p',n}X|^{\beta}.
\end{align*}
Moreover
\begin{align*}
 |\triangle_{p,n}X|^\beta |\triangle_{p',n}X|^{\beta}&
 \stackrel{(d)}{=}|\sum_{k=0}^K\frac{a_k}{n^H}X(k+p)|^\beta
 |\sum_{k=0}^K\frac{a_k}{n^H}X(k+p')|^{\beta}\\
 &=\frac{1}{n^{2\beta H}}|\sum_{k=0}^Ka_p[X(k+p)-X(p')]|^\beta
 |\sum_{k=0}^Ka_k[X(k+p')-X(p')]|^{\beta}\\
 &\stackrel{(d)}{=}\frac{1}{n^{2\beta H}}|\sum_{k=0}^Ka_kX(k+p-p')|^\beta
 |\sum_{k=0}^Ka_kX(k)|^{\beta}\\
 &=\frac{1}{n^{2\beta H}}|\triangle_{p-p',1}X|^\beta | \triangle_{0,1}X|^{\beta}.
\end{align*}
It follows that
\begin{align*}
 \mathbb{E}|\triangle_{p,n}X|^\beta|\triangle_{p',n}X|^{\beta}&=\frac{\mathbb{E}|\triangle_{p-p',1}X|^\beta
 |\triangle_{0,1}X|^{\beta}}{n^{2\beta H}}
 =\frac{\mathbb{E}|\triangle_{k,1}X|^\beta
 |\triangle_{0,1}X|^{\beta} }{n^{2\beta H}}
 \end{align*}
 with $k=p-p'$.
  Thus
 $$\mathbb{E}| W_n(\beta)|^2=\frac{1}{n-K+1}\sum_{|k|\leq n-K}(1-\frac{|k|}{n-K+1})
 \mathbb{E}| \triangle_{k,1}X|^\beta | \triangle_{0,1}X|^{\beta}.$$
 One has
 \begin{align*}
   \mathbb{E}| W_n(\beta)-\mathbb{E}|\triangle_{0,1}X|^\beta|^2&= 
  \mathbb{E}| W_n(\beta)|^2-\mathbb{E}|\triangle_{0,1}X|^\beta
  \mathbb{E}|\triangle_{0,1}X|^{\beta}.
\end{align*}
On the other hand,
since $\mathbb{E}|\triangle_{k,1}X|^{\beta}=\mathbb{E}|\triangle_{0,1}X|^{\beta}$ and 
$
 \frac{1}{n-K+1}\sum\limits_{\mid k\mid\leq n-K}(1-\frac{\mid k\mid}{n-K+1})=1
 $, it follows that
\begin{equation}\label{eq.10}
\mathbb{E}| W_n(\beta)-\mathbb{E}|\triangle_{0,1}X|^\beta|^2=
\frac{1}{n-K+1}\sum_{| k|\leq n-K}(1-\frac{| k|}{n-K+1})
cov(|\triangle_{k,1}X|^\beta,|\triangle_{0,1}X|^\beta).
\end{equation}
Using (\ref{eq.10}) and the assumption (\ref{eq.9.2}), one obtains
$$
\limsup_{n} \frac{1}{b_n^2}\mathbb{E}| W_n(\beta)-E|\triangle_{0,1}X|^\beta|^2\leq \Sigma^2.
$$
For all $\epsilon>0$, applying Markov's inequality and using (\ref{eq.10}), we get
\begin{align*}
 \sup_{n}P(| W_n(\beta)- \mathbb{E}|\triangle_{0,1} X|^\beta|> b_n\frac{\Sigma}{\sqrt{\epsilon}})
&\leq \limsup_{n}\frac{\mathbb{E}| W_n(\beta)-E|\triangle_{0,1}X|^\beta|^2}
{{b_n^2}\frac{\Sigma^2}{\epsilon}}\leq \epsilon.
\end{align*}
It follows that
\begin{align}\label{eq.10.1}
 W_n (\beta)- \mathbb{E}|\triangle_{0,1}X|^\beta=O_{\mathbb{P}}(b_n).
 \end{align}
 In a similar way, combining with the fact that $b_{n/2}=O(b_n)$, one also has 
 \begin{align}\label{eq.10.2}
 W_{n/2} (\beta)- \mathbb{E}|\triangle_{0,1}X|^\beta=O_{\mathbb{P}}(b_n).
 \end{align}
 Now we will prove that $\widehat{H}_n-H=O_\mathbb{P}(b_n)$.\\
Let $\phi: \mathbb{R}^+\times \mathbb{R}^+\rightarrow \mathbb{R}$ be defined by
\begin{align}\label{eq.10.3}
\phi(x,y)=\log_2\frac{x}{y}.
\end{align}
Then
 $\widehat{H}_n-H =\phi(W_{n/2}(\beta), W_n(\beta))$.\\
  We need the following lemma.
 \begin{lem}\label{lem.B.3}
Let $f:D\subset\mathbb{R}^2\rightarrow \mathbb{R}$, be differentiable at a constant vector $(a,b)\in D$.
Let $(X_n,Y_n)$ be random vectors whose ranges lie in $D$ such that
$X_n\xrightarrow{\mathbb {P}} a,Y_n\xrightarrow{\mathbb {P}} b$ 
and $X_n-a=O_\mathbb{P}(b_n),Y_n-b=O_\mathbb{P}(b_n)$ where  $\{b_n\}_n $ is a non-negative sequence and
$b_n\rightarrow 0 $ as $n\rightarrow +\infty$.\\
Then $f(X_n,Y_n)-f(a,b)=O_{\mathbb{P}}(b_n)$.
 \end{lem}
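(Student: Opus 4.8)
The plan is to mimic the proof of the one-dimensional Lemma~\ref{lem.B.2}, replacing the scalar increment by the vector increment $(X_n-a,Y_n-b)$, and to reduce everything to the stochastic Landau calculus of Lemma~2.12 in \cite{Van Der Vaart}. (The hypotheses are evidently meant to read $Y_n-b=O_{\mathbb{P}}(b_n)$ and $b_n\to 0$; with these, $(X_n,Y_n)\xrightarrow{\mathbb{P}}(a,b)$ holds automatically.) First I would invoke the differentiability of $f$ at $(a,b)$ to write, for $(x,y)$ in a neighbourhood of $(a,b)$,
\begin{equation*}
f(x,y)-f(a,b)=\partial_1 f(a,b)\,(x-a)+\partial_2 f(a,b)\,(y-b)+R(x,y),
\end{equation*}
where $R(a,b)=0$ and the remainder satisfies $R(x,y)=o\big(\|(x-a,y-b)\|\big)$ as $(x,y)\to(a,b)$.

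Next, set $h_n=(X_n-a,Y_n-b)$. By hypothesis each coordinate of $h_n$ is $O_{\mathbb{P}}(b_n)$, so $\|h_n\|=O_{\mathbb{P}}(b_n)$ and in particular $h_n\xrightarrow{\mathbb{P}}0$. Evaluating the expansion at $(X_n,Y_n)$, the linear part $\partial_1 f(a,b)(X_n-a)+\partial_2 f(a,b)(Y_n-b)$ is a fixed linear combination of two $O_{\mathbb{P}}(b_n)$ terms, hence is itself $O_{\mathbb{P}}(b_n)$. For the remainder I would apply Lemma~2.12 of \cite{Van Der Vaart} with $p=1$: since $R(h)=o(\|h\|)$ as $h\to0$ and $h_n\xrightarrow{\mathbb{P}}0$, that lemma gives $R(X_n,Y_n)=o_{\mathbb{P}}(\|h_n\|)$, and combining with $\|h_n\|=O_{\mathbb{P}}(b_n)$ and the product rule $o_{\mathbb{P}}(1)\cdot O_{\mathbb{P}}(b_n)=o_{\mathbb{P}}(b_n)$ yields $R(X_n,Y_n)=o_{\mathbb{P}}(b_n)$.

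Adding the two contributions gives $f(X_n,Y_n)-f(a,b)=O_{\mathbb{P}}(b_n)+o_{\mathbb{P}}(b_n)=O_{\mathbb{P}}(b_n)$, which is the assertion. There is no substantial obstacle: this is just the two-variable delta-method bound, and the analytic content is entirely carried by the cited Landau calculus. The only point needing care is the bookkeeping of the stochastic order symbols --- in particular checking, via the product rule, that the remainder (which is only $o$ of the Euclidean norm of the increment) is converted to $o_{\mathbb{P}}(b_n)$ rather than merely $O_{\mathbb{P}}(b_n)$, so that the dominant $O_{\mathbb{P}}(b_n)$ rate comes solely from the linear term.
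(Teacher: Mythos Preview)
Your proposal is correct and follows essentially the same approach as the paper: expand $f$ to first order at $(a,b)$, apply Lemma~2.12 of \cite{Van Der Vaart} to the remainder $R(x,y)=f(a+x,b+y)-f(a,b)-x\,\partial_1 f(a,b)-y\,\partial_2 f(a,b)$ evaluated at $(X_n-a,Y_n-b)$, and then combine via the stochastic Landau rules. The paper's own proof is in fact terser---it simply says ``similarly with the proof of Lemma~\ref{lem.B.2}''---so your version, including the explicit handling of $\|h_n\|=O_{\mathbb P}(b_n)$ and the product rule for the remainder, is if anything more complete.
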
 
 \begin{proof}
 Since $f$ is differentiable at $(a,b)$, we can write
\begin{align*}
f(a+h_1,b+h_2)
&=f(a,b)+h_1\frac{\partial f}{\partial x}(a,b)+h_2\frac{\partial f}{\partial y}(a,b)
+o(||(h_1,h_2||).
\end{align*}
as $||h||=||(h_1,h_2)||\rightarrow 0$.\\
By applying Lemma 2.12 in \cite{VanDerVaart2000} for $R(x,y)=f(a+x,b+y)-f(a,b)-
x\frac{\partial f}{\partial x}(a,b)-y\frac{\partial f}{\partial y}(a,b)$
and the sequence random vector $(X_n-a,Y_n-b)$, we get
\begin{align*}
 f(X_n,Y_n)-f(a,b)&=
 (X_n-a)\frac{\partial f}{\partial x}(a,b)+(Y_n-a)\frac{\partial f}{\partial y}(a,b)+
 o_{\mathbb{P}}(||(X_n-a,Y_n-b)||)\\
 &=(X_n-a)O_{\mathbb{P}}(1)+(Y_n-a)O_{\mathbb{P}}(1)+
 o_{\mathbb{P}}(||(X_n-a,Y_n-b)||)\\
 &=b_n O_{\mathbb{P}}(1)O_{\mathbb{P}}(1)+b_n O_{\mathbb{P}}(1)O_{\mathbb{P}}(1)
 +b_n O_{\mathbb{P}}(1)o_{\mathbb{P}}(1)\\
 &=b_n O_{\mathbb{P}}(1)+b_n O_{\mathbb{P}}(1)+b_no_{\mathbb{P}}(1)\\
 &=b_n O_{\mathbb{P}}(1)
  \end{align*}
 \end{proof}
  Applying Lemma \ref{lem.B.3} with $f=\phi$ and vector $(\mathbb{E}\mid\triangle_{0,1}X\mid^\beta,\mathbb{E}\mid\triangle_{0,1}X\mid^\beta)$, combining with (\ref{eq.10.1}), (\ref{eq.10.2}) and the fact that $\phi(\mathbb{E}\mid\triangle_{0,1}X\mid^\beta,\mathbb{E}\mid\triangle_{0,1}X\mid^\beta)=0$, it follows that $\widehat{H}_n-H=O_\mathbb{P}(b_n)$. \\
  Since $\lim\limits_{n\rightarrow+\infty}b_n=0$, it induces that $\lim\limits_{n\rightarrow+\infty}\widehat{H}_n\stackrel{(\mathbb{P})}{=}H$.\\
  To prove that $\widehat{\alpha}_n-\alpha=O_\mathbb{P}(b_n)$, we first prove that
  \begin{align}\label{eq.19.0.1}
h_{-\beta_1,-\beta_2}(\alpha)=\psi_{-\beta_1,-\beta_2}
 (\mathbb{E}|\triangle_{0,1}X|^{\beta_1},\mathbb{E}|\triangle_{0,1}X|^{\beta_2})
\end{align}
where $h_{u,v}, \psi_{u,v}$ are defined by (\ref{eq.18.1}), (\ref{eq.19.1}), respectively.\\
Indeed, since $\{X_t, t\in\mathbb{R}\}$ is a $H$-sssi $S\alpha S-$stable process, there exists a constant $\sigma>0$ such that
$\triangle_{0,1}X=\sigma Y$,
where $Y$ is the standard $H$-sssi, $S\alpha S$ random variable.
For $\beta_1,\beta_2\in\mathbb{R},-1/2<\beta_1,\beta_2<0$, from Lemma \ref{lem.01}, we have
\begin{align*}
\mathbb{E}|\triangle_{0,1}X|^{\beta_1} &=\sigma^{\beta_1} \mathbb{E}|Y|^{\beta_1}
=\sigma^{\beta_1}\frac{2^{\beta_1}\Gamma(\frac{\beta_1+1}{2})
\Gamma(1-\frac{\beta_1}{\alpha})}{\sqrt{\pi}\Gamma(1-\frac{\beta_1}{2})}.
\end{align*}
Thus
\begin{align*}
\left(\mathbb{E}|\triangle_{0,1}X|^{\beta_1}\right)^{\beta_2} 
&=\sigma^{\beta_1\beta_2}\left(\frac{2^{\beta_1}\Gamma(\frac{\beta_1+1}{2})
\Gamma(1-\frac{\beta_1}{\alpha})}{\sqrt{\pi}\Gamma(1-\frac{\beta_1}{2})}\right)^{\beta_2}.
\end{align*}
Similarly, we also get
\begin{align*}
\left(\mathbb{E}|\triangle_{0,1}X|^{\beta_2}\right)^{\beta_1} 
&=\sigma^{\beta_1\beta_2}\left(\frac{2^{\beta_2}\Gamma(\frac{\beta_2+1}{2})
\Gamma(1-\frac{\beta_2}{\alpha})}{\sqrt{\pi}\Gamma(1-\frac{\beta_2}{2})}\right)^{\beta_1}.
\end{align*}
Moreover, from Lemma \ref{lem.9}, $\mathbb{E}|\triangle_{0,1}X|^\beta\neq 0$ for all $ -1/2<\beta<0$, then it induces
\begin{align*}
 \frac{\left(\mathbb{E}|\triangle_{0,1}X|^{\beta_1}\right)^{\beta_2}}
 {\left(\mathbb{E}|\triangle_{0,1}X|^{\beta_2}\right)^{\beta_1}}&=
 \frac{\pi^{\frac{\beta_1-\beta_2}{2}}\Gamma^{\beta_1}(1-\frac{\beta_2}{2})
 \Gamma^{\beta_2}(\frac{\beta_1+1}{2})\Gamma^{\beta_2}(1-\frac{\beta_1}{\alpha})}
 {\Gamma^{\beta_2}(1-\frac{\beta_1}{2})
 \Gamma^{\beta_1}(\frac{\beta_2+1}{2})\Gamma^{\beta_1}(1-\frac{\beta_2}{\alpha})}.
\end{align*}
Taking the natural logarithm, we have
\begin{align*}
\beta_2\ln(\mathbb{E}|\triangle_{0,1}X|^{\beta_1})-\beta_1\ln(\mathbb{E}|\triangle_{0,1}X|^{\beta_2})=
&\frac{\beta_1-\beta_2}{2}\ln (\pi)+\beta_1 \ln \left( \Gamma(1-\frac{\beta_2}{2})\right)+\\
&\beta_2 \ln \left(\Gamma(\frac{\beta_1+1}{2})\right)+\beta_2\ln \left(\Gamma(1-\frac{\beta_1}{\alpha})\right)\\
&-\beta_2 \ln \left( \Gamma(1-\frac{\beta_1}{2})\right)-\beta_1 \ln \left(\Gamma(\frac{\beta_2+1}{2})\right)
-\beta_1\ln \left(\Gamma(1-\frac{\beta_2}{\alpha})\right).
\end{align*}
It follows that
\begin{align*}
\beta_2\ln \left(\Gamma(1-\frac{\beta_1}{\alpha})\right)- 
 \beta_1\ln \left(\Gamma(1-\frac{\beta_2}{\alpha})\right)=&
 \beta_2\ln(\mathbb{E}|\triangle_{0,1}X|^{\beta_1})-\beta_1\ln(\mathbb{E}|\triangle_{0,1}X|^{\beta_2})\\
 &+\frac{\beta_2-\beta_1}{2}\ln (\pi)-
 \beta_1 \ln \left( \Gamma(1-\frac{\beta_2}{2})\right)-\beta_2 \ln \left(\Gamma(\frac{\beta_1+1}{2})\right)\\
 &+\beta_2 \ln \left(\Gamma(1-\frac{\beta_1}{2})\right)
 +\beta_1 \ln \left(\Gamma(\frac{\beta_2+1}{2})\right)
\end{align*}
or 
$
h_{-\beta_1,-\beta_2}(\alpha)=\psi_{-\beta_1,-\beta_2}
 (\mathbb{E}|\triangle_{0,1}X|^{\beta_1},\mathbb{E}|\triangle_{0,1}X|^{\beta_2})
$.\\
From the following lemma, we can deduce that $h_{u,v}$ is a strictly increasing function on $(0,+\infty)$ and $\lim\limits_{x\rightarrow+\infty}h_{u,v}(x)=0, \lim\limits_{x\rightarrow 0}h_{u,v}(x)=-\infty $. Moreover, there exists an inverse function $$h_{u,v}^{-1}: (-\infty,0)\rightarrow (0,+\infty)$$ which is continuous and on $(-\infty, 0)$.  
 \begin{lem}\label{lem.13}
 Let $0<v<u$ and $g_{u,v}: (0,+\infty)\rightarrow \mathbb{R}$ be a function defined by  
 $$ g_{u,v}(x)=u\ln \left(\Gamma(1+vx)\right)- 
 v\ln \left(\Gamma(1+ux)\right).$$
Then
   $ g_{u,v}$ is a strictly decreasing function on $(0,+\infty)$ and 
  $$
   \lim_{x\rightarrow 0}g_{u,v}(x)=0,\lim_{x\rightarrow +\infty}g_{u,v}(x)=-\infty. 
  $$
  \end{lem} 
   \begin{proof}
 We have
  \begin{align*}
   g'_{u,v}(x)&=uv\frac{\Gamma'(1+vx)}{\Gamma(1+vx)}
   -uv\frac{\Gamma'(1+ux)}{\Gamma(1+ux)}=uv\left( \frac{\Gamma'(1+vx)}{\Gamma(1+vx)}
   -\frac{\Gamma'(1+ux)}{\Gamma(1+ux)} \right).
  \end{align*}
Following Bohn-Mollerup's theorem, $\Gamma$ is a log-convex function. Let $k(y)$ be defined by
$
k(y)=\ln \Gamma(y).
$
Then $k''(y)\geq 0$ for all $y> 0$. It follows that 
$\psi(y):=k'(y)=\frac{\Gamma'(y)}{\Gamma(y)}$ is an increasing function.\\
Since $\Gamma(x+1)=x\Gamma(x)$, we have
$\Gamma'(x+1)=\Gamma(x)+x\Gamma'(x)$. We obtain
\begin{align*}
\psi(x+1)&=\frac{\Gamma'(x+1)}{\Gamma(x+1)}=\frac{\Gamma(x)+x\Gamma'(x)}{x\Gamma(x)}
=\frac{1}{x}+\psi(x).
\end{align*}
We will prove that $\psi$ increases strictly.\\
Assume that there exist $x_0,y_0$ such that $0<x_0<y_0$ and $\psi(x_0)=\psi(y_0)$, then
$$\psi(x_0+1)-\psi(y_0+1)=\frac{1}{x_0}-\frac{1}{y_0}=\frac{y_0-x_0}{x_0y_0}>0.$$
However, $x_0+1<y_0+1$, then $\psi(x_0+1)\leq\psi(y_0+1)$ but this could not happen. 
Thus $\psi$ is a strictly increasing function.\\
We also have
$1<1+vx<1+ux$, so
$
\frac{\Gamma'(1+vx)}{\Gamma(1+vx)}
   -\frac{\Gamma'(1+ux)}{\Gamma(1+ux)}< 0.
$\\
It induces that $g'_{u,v}(x)< 0$ for all $0<v<u$ and $x>0$.
This proves that $g_{u,v}(x)$ is a strictly decreasing function.\\
It is clear that  $\lim\limits_{x\rightarrow 0}g_{u,v}(x)=0$. Now we need to prove that 
$\lim\limits_{x\rightarrow +\infty}g_{u,v}(x)=-\infty$.\\
Applying Stirling's formula, we have
$$\ln\Gamma(1+z)=\ln(z\Gamma(z))=\ln z+\ln\Gamma(z)=(z+\frac{1}{2})\ln z-z+\frac{1}{2}\ln(2\pi)+O(z^{-1})$$
as $z\rightarrow +\infty$. Then
\begin{align*}
 g_{u,v}(x)&=u\left( (vx+\frac{1}{2})\ln (vx)-vx+\frac{1}{2}\ln(2\pi)+O((vx)^{-1})\right)\\
 &-v\left( (ux+\frac{1}{2})\ln (ux)-ux+\frac{1}{2}\ln(2\pi)+O((ux)^{-1})\right)\\
 &=uv(\ln v-\ln u)x+\frac{u-v}{2}\ln x+\frac{u\ln v-v\ln u}{2}+\frac{(u-v)\ln (2\pi) }{2}+O(\frac{u}{vx})-
 O(\frac{v}{ux}) 
\end{align*}
as $x\rightarrow +\infty$. \\
Since $0<v<u$ we deduce $uv(\ln v-\ln u)<0$. Moreover 
$\lim\limits_{x\rightarrow+\infty}\frac{\ln x}{x}=0,$
it follows that
$$\lim\limits_{x\rightarrow +\infty}g_{u,v}(x)=-\infty.$$
\end{proof}
Then since $\alpha\in(0,2]$, we obtain that $
h_{-\beta_1,-\beta_2}(\alpha)=\psi_{-\beta_1,-\beta_2}
 (\mathbb{E}|\triangle_{0,1}X|^{\beta_1},\mathbb{E}|\triangle_{0,1}X|^{\beta_2})<0
$. On the other hand
$$\psi_{-\beta_1,-\beta_2}(W_n(\beta_1),W_n(\beta_2))=\psi_{-\beta_1,-\beta_2}(V_n(\beta_1),V_n(\beta_2)).$$
We deduce that
\begin{align*}
 \hat{\alpha}_n-\alpha&=\varphi_{-\beta_1,-\beta_2}\left(\psi_{-\beta_1,-\beta_2}(W_n(\beta_1),W_n(\beta_2))\right)
 -h^{-1}_{-\beta_1,-\beta_2}\left(h_{-\beta_1,-\beta_2}(\alpha)\right)\\
 &=\varphi_{-\beta_1,-\beta_2}\left(\psi_{-\beta_1,-\beta_2}(W_n(\beta_1),W_n(\beta_2))\right)
 -\varphi_{-\beta_1,-\beta_2}\left(h_{-\beta_1,-\beta_2}(\alpha) \right)\\
 &=\varphi_{-\beta_1,-\beta_2}\left(\psi_{-\beta_1,-\beta_2}(W_n(\beta_1),W_n(\beta_2))\right)
 -\varphi_{-\beta_1,-\beta_2}(\psi_{-\beta_1,-\beta_2}(\mathbb{E}|\triangle_{0,1}X|^{\beta_1},\mathbb{E}|\triangle_{0,1}X|^{\beta_2})).
\end{align*}
Moreover, $\varphi_{-\beta_1,-\beta_2}\circ \psi_{-\beta_1,-\beta_2}$ is continuous and differentiable at 
\begin{equation}\label{eq.20}
 x_0=
(\mathbb{E}|\triangle_{0,1}X|^{\beta_1},\mathbb{E}|\triangle_{0,1}X|^{\beta_2}).
\end{equation}
 Combining with (\ref{eq.10.1}), (\ref{eq.10.2}), we apply Lemma \ref{lem.B.3}, and get 
 $$\widehat{\alpha}_n-\alpha=O_\mathbb{P}(b_n).$$ 
 It also induces that $\lim\limits_{n\rightarrow +\infty}\widehat{\alpha}_n\stackrel{(\mathbb{P})}{=}\alpha$.
\end{proof}
\subsection{Proofs related to Section \ref{section.3}}\label{subsection.4.3}
Now we are in position to prove Theorems related to examples presented in Section \ref{section.3}.
\begin{proof}[\bf{Proof of Theorem \ref{thm.6}}]
a) From Lemma \ref{lem.11} in Appendix, the assumption (\ref{eq.9.2}) is satisfied. Then following Theorem \ref{thm.5}, we have 
$$\hat{H}_n-H=O_\mathbb{P}(n^{-1/2}), \hat{\alpha}_n-2= O_\mathbb{P}(n^{-1/2}).$$
b) We now prove the asymptotic normality for the estimators of $H$ and $\alpha$. To prove $\sqrt{n}(\hat{H}_n-H)$ converges to a normal distribution as $n\rightarrow +\infty$, we will first prove that 
\begin{align}\label{eq.12.1}
\sqrt{n}\left((W_{n}(\beta),W_{n/2}(\beta))-
(\mathbb{E}|\triangle_{0,1}X|^\beta,\mathbb{E}|\triangle_{0,1}X|^\beta) \right)&
\xrightarrow{(d)}\mathcal{N}_2(0,\varGamma_1)
\end{align}
as $n\rightarrow +\infty$, where $\varGamma_1$ is defined by (\ref{eq.11.2.3}). 
Then, we need to prove that for all $a,b\in\mathbb{R}, ab\neq 0$, 
\begin{equation}\label{eq.12.2}
V_n:= a\sqrt{n}(W_{n}(\beta)-\mathbb{E}|\triangle_{0,1}X|^{\beta})+
 b\sqrt{n}(W_{n/2}(\beta)-\mathbb{E}|\triangle_{0,1}X|^{\beta})
\end{equation}
converges to $G\sim\mathcal{N}_1(0,\sigma^2)$ as $n\rightarrow +\infty$, where 
\begin{equation}\label{eq.12.3}
\sigma^2=(a^2+2b^2)\sum\limits_{q\geq d}q!f^2_q\sum\limits_{r\in\mathbb{Z}}\rho^q(r)+
2ab\sum\limits_{q\geq d}q!f^2_q\sum\limits_{r\in\mathbb{Z}}\rho^q_1(r),
\end{equation}
$f_q$s, $\rho, \rho_1$ are defined by(\ref{eq.11.1}), (\ref{eq.11.2.1}), (\ref{eq.11.2.2}), respectively.
 Since $\{X_t\}_{t\geq 0}$ is a $H$-sssi process, for all $n\in \mathbb{N}^*$, we get
  \begin{align*}
   &\left( \triangle_{0,n}X,\triangle_{1,n}X,\ldots, \triangle_{n-K,n}X, 
   \triangle_{0,n/2}X,\triangle_{1,n/2}X,\ldots, \triangle_{n/2-K,n/2}X
   \right)\\
  & \stackrel{(d)}{=}
  \frac{1}{(n/2)^H}\left( \triangle_{0,2}X,\triangle_{1,2}X,\ldots, \triangle_{n-K,2}X, 
   \triangle_{0,1}X,\triangle_{1,1}X,\ldots, \triangle_{n/2-K,1}X\right).
  \end{align*}
  Moreover $var \triangle_{k,2}=\frac{var \triangle_{0,1}X}{2^{2H}}, 
var \triangle_{k,1}X=var \triangle_{0,1}X$. It follows that
\begin{align*}
 &\sqrt{n}\left((W_{n}(\beta),W_{n/2}(\beta))-(\mathbb{E}|\triangle_{0,1}X|^\beta,\mathbb{E}|\triangle_{0,1}X|^\beta) \right)\\
  &\stackrel{(d)}{=}
 \sqrt{n}\left(\frac{(n)^{\beta H}}{n-K+1}
 \sum\limits_{k=0}^{n-K}\frac{|\triangle_{k,2}|^{\beta}}{(n/2)^{\beta H}}-\mathbb{E}|\triangle_{0,1}X|^{\beta},
 \frac{(n/2)^{\beta H}}{n/2-K+1}
 \sum\limits_{k=0}^{n/2-K}\frac{|\triangle_{k,1}|^{\beta}}{(n/2)^{\beta H}}-\mathbb{E}|\triangle_{0,1}X|^{\beta}\right)\\
 &=
 \sqrt{n}\left(\frac{2^{\beta H}}{n-K+1}
 \sum\limits_{k=0}^{n-K}|\triangle_{k,2}X|^{\beta}-\mathbb{E}|\triangle_{0,1}X|^{\beta},
 \frac{1}{n/2-K+1}
 \sum\limits_{k=0}^{n/2-K}|\triangle_{k,1}X|^{\beta}-\mathbb{E}|\triangle_{0,1}X|^{\beta}\right)\\
 &=(var \triangle_{0,1}X)^{\beta/2} \left(\frac{\sqrt{n}}{n-K+1}
 \sum\limits_{k=0}^{n-K}(|Y_k|^{\beta}
 -\mathbb{E}|Z_0|^{\beta}),
 \frac{\sqrt{n}}{n/2-K+1}
 \sum\limits_{l=0}^{n/2-K}(|Z_l|^{\beta}
 -\mathbb{E}|Z_0|^{\beta})\right)\\
 &=\left(\frac{\sqrt{n}}{n-K+1}
 \sum\limits_{k=0}^{n-K}f_{\beta}(Y_k),
 \frac{\sqrt{n}}{n/2-K+1}
 \sum\limits_{l=0}^{n/2-K}f_{\beta}(Z_l)\right)
 \end{align*}
 where $Y_k=\frac{\triangle_{k,2}X}{\sqrt{var \triangle_{k,2}X}},
Z_l=\frac{\triangle_{l,1}X}{\sqrt{var \triangle_{l,1}X}}$ and $f_\beta$ is defined by (\ref{eq.11.0}).\\ 
 We obtain that 
$Y_k\sim\mathcal{N}_1(0,1),Z_l\sim\mathcal{N}_1(0,1)$, and
\begin{align*}
 \mathbb{E} Y_kY_{k'}&=\frac{\mathbb{E}(\triangle_{k,2}X\triangle_{k',2}X)}{\frac{var\triangle_{0,1}X}{2^{2H}}}=\frac{\sum\limits_{p,p'=0}^Ka_pa_{p'}|k-k'+p-p'|^{2H}}{\sum\limits_{p,p'=0}^Ka_pa_{p'}|p-p'|^{2H}}.
\end{align*}
\begin{equation*}
 \mathbb{E} Y_kZ_l=\frac{\mathbb{E}(\triangle_{k,2}X\triangle_{l,1}X)}{\frac{var\triangle_{0,1}X}{2^{H}}}\\
 =\frac{\sum\limits_{p,p'=0}^Ka_pa_{p'}|k-2l+p-2p'|^{2H}}{2^H\sum\limits_{p,p'=0}^Ka_pa_{p'}|p-p'|^{2H}},
\end{equation*}
\begin{equation*}
 \mathbb{E} Z_lZ_{l'}=\frac{\mathbb{E}(\triangle_{l,1}X\triangle_{l',1}X)}{\frac{var\triangle_{0,1}X}{2^{H}}}\\
 =\frac{\sum\limits_{p,p'=0}^Ka_pa_{p'}|k-l+p-p'|^{2H}}{\sum\limits_{p,p'=0}^Ka_pa_{p'}|p-p'|^{2H}},
\end{equation*}
Then $\mathbb{E} Y_kY_{k'}=\rho(k-k'), \mathbb{E} Z_lZ_{l'}=\rho (l-l') $ and $\mathbb{E} Y_kZ_l=\rho_1(k-2l)$ where $\rho, \rho_1$ are defined by (\ref{eq.11.2.1}), (\ref{eq.11.2.2}), respectively.  
As in the proof of Lemma \ref{lem.11} in Appendix, we can prove that for $r$ big enough
\begin{equation}\label{eq.12.3.1}
 \rho(r)|\leq C |r|^{2H-3},|\rho_1(r)|\leq C |r|^{2H-3}.
\end{equation}
We then mimic the proof of Theorem 7.2.4 in \cite{Nourdin2012} to get $V_n\xrightarrow{(d)}\mathcal{N}_1(0,\sigma^2)$ as $n\rightarrow +\infty$, it follows (\ref{eq.12.1}). On the other hand, we have
 \begin{align*}
  \sqrt{n}(\hat{H}_{n}-H)&=\sqrt{n}\frac{1}{\beta}\log_2\frac{W_{n/2}(\beta)}{W_{n}(\beta)}=\sqrt{n}\left(\phi(W_{n}(\beta),W_{n/2}(\beta))-
  \phi(\mathbb{E}|\triangle_{0,1}X|^\beta,\mathbb{E}|\triangle_{0,1}X|^\beta)\right).
 \end{align*}
 where $\phi$ is defined as in (\ref{eq.11.3.1}).\\
  Since $\phi$ is differentiable at $(x_0,y_0)=(\mathbb{E}|\triangle_{0,1}X|^\beta,\mathbb{E}|\triangle_{0,1}X|^\beta)$,
we can apply Theorem 3.1 in \cite{Dacunha1983} to get 
$$\sqrt{n}(\hat{H}_{n}-H)\xrightarrow{(d)}\mathcal{N}_1(0,\Xi_1)$$
as $n\rightarrow +\infty$, where $\Xi_1$ is defined by (\ref{eq.11.2.4}).\\ 
Now we prove central limit theorem for the estimation of $\alpha$. We will prove that 
\begin{equation}\label{eq.27.1}
 \left( \sqrt{n}(W_n(\beta_1)-\mathbb{E}|\triangle_{0,1}X|^{\beta_1}),
 \sqrt{n}(W_n(\beta_2)-\mathbb{E}|\triangle_{0,1}X|^{\beta_2})) \right)\xrightarrow{(d)}\mathcal {N}_2(0,\varGamma_2)
\end{equation}
as $n\rightarrow +\infty$, with $\varGamma_2$ defined by (\ref{eq.23}).\\
Since $\{X_t\}_{t\in \mathbb{R}}$ is a $H$-sssi process, we have
 \begin{equation}\label{eq.28}
  \left( \triangle_{0,n}X,\ldots, \triangle_{n-K,n}X \right)\stackrel{(d)}{=}
  \frac{1}{n^H}\left( \triangle_{0,1}X,\ldots, \triangle_{n-K,1}X \right).
 \end{equation}
On the other hand, $var \triangle_{k,1}X= var \triangle_{0,1}X$. Then we can write
\begin{align*}
 &\left( \sqrt{n}(W_n(\beta_1)-\mathbb{E}|\triangle_{0,n}X|^{\beta_1}),
 \sqrt{n}(W_n(\beta_2)-\mathbb{E}|\triangle_{0,n}X|^{\beta_2})) \right)\\
 &\stackrel{(d)}{=} \left( \sqrt{n}\left(\frac{n^{\beta_1 H}}{n-K+1}
 \sum_{k=0}^{n-K}\frac{|\triangle_{k,1}X|^{\beta_1}}{n^{\beta_1 H}}-\mathbb{E}|\triangle_{0,1}X|^{\beta_1}\right), 
 \sqrt{n}\left(\frac{n^{\beta_2 H}}{n-K+1}
 \sum_{k=0}^{n-K}\frac{|\triangle_{k,1}X|^{\beta_2}}{n^{\beta_2 H}}-\mathbb{E}|\triangle_{0,1}X|^{\beta_2}\right)\right)
 \\
  &=\left( \sqrt{n}\left(\frac{1}{n-K+1}
 \sum_{k=0}^{n-K}|\triangle_{k,1}X|^{\beta_1}-\mathbb{E}|\triangle_{0,1}X|^{\beta_1}\right), 
 \sqrt{n}\left(\frac{1}{n-K+1}
 \sum_{k=0}^{n-K}|\triangle_{k,1}X|^{\beta_2}-\mathbb{E}|\triangle_{0,1}X|^{\beta_2}\right)\right) \\
&=\sqrt{\frac{n}{n-K+1}}
\left( \frac{1}{\sqrt{n-K+1}}\sum_{k=0}^{n-K}f_{\beta_1}(Z_k), \frac{1}{\sqrt{n-K+1}}\sum_{k=0}^{n-K}f_{\beta_2}(Z_k)\right).
\end{align*}
where $f_{\beta_1}$ and $f_{\beta_2}$ are defined as in (\ref{eq.11.0}) and $Z_k=\frac{\triangle_{k,1}X}{\sqrt{var \triangle_{k,1}X}}, Z_k\sim \mathcal{N}_1(0,1)$.\\
We have $\mathbb{E}f_{\beta_1}(Z_0)=\mathbb{E}f_{\beta_2}(Z_0)=0, \mathbb{E}f_{\beta_1}^2(Z_0)<+\infty, \mathbb{E}f_{\beta_2}^2(Z_0)<+\infty$ and $\mathbb{E}Z_kZ_l=\rho(k-l)$ where $\rho$ is defined by (\ref{eq.11.2.1}).\\
We mimic the proof of Theorem 7.2.4 of \cite{Nourdin2012} to obtain that 
$$
a\sqrt{n}\left( W_n(\beta_1)-\mathbb{E}|\triangle_{0,1}|^{\beta_1} \right)+b\sqrt{n}\left( W_n(\beta_2)-\mathbb{E}|\triangle_{0,1}|^{\beta_2} \right)
$$
converges to $\mathcal {N}_1(0,\sigma^2)$ as $n\rightarrow +\infty$ for all $a,b\in\mathbb{R},ab\neq 0$, where
$\sigma^2=\sum\limits_{q=d}^{+\infty}q!(ah_q+bg_q)^2\sum\limits_{r\in\mathbb{Z}}\rho(r)^q.$
Here $ \rho, h_q, g_q$ are defined by (\ref{eq.11.2.1}), (\ref{eq.21.1}) respectively. This proves (\ref{eq.27.1}).\\
The function
$\varphi_{-\beta_1,-\beta_2}\circ \psi_{-\beta_1,-\beta_2}: \mathbb{R}^{+}\times\mathbb{R}^{+}\rightarrow[0,+\infty)$ 
is differentiable at 
$$(x_1,y_1)=(\mathbb{E}|\triangle_{0,1}X|^{\beta_1},\mathbb{E}|\triangle_{0,1}X|^{\beta_2}),$$
 where 
$\psi_{u,v}, \varphi_{u,v}$ are defined by (\ref{eq.19.1}), (\ref{eq.20.1}) respectively.\\
We can therefore apply Theorem 3.1 of \cite{Dacunha1983} to get the conclusion.
\end{proof}
\begin{proof}[\bf{Proof of Theorem \ref{thm.7}}]
a) We will check the assumption (\ref{eq.9.2}).\\
 Let $0\leq l<k, k-l\geq K$, then
 $
 \triangle_{k,1}X=\sum\limits_{p=0}^Ka_p\left[X(k+p)-X(k)\right]
 $
 and
 $
 \triangle_{l,1}X=\sum\limits_{p'=0}^Ka_{p'}\left[X(l+p')-X(l)\right].
 $\\
 By the fact that $\{X_t\}_{t\geq 0}$ has independent increments, we obtain that 
 $X(l+p')-X(l),X(k+p)-X(k)$
are independent for all $p,p'=0,\ldots, K$ since $0\leq l\leq l+p'\leq k\leq k+p$.\\
It follows that $\triangle_{k,1}X$ and $\triangle_{0,1}X$ are independent for $|k|\geq K$. Thus
$ cov(\mid\triangle_{k,1}X\mid^\beta,\mid\triangle_{0,1}X\mid^\beta)=0.
 $
 We deduce that
$$\frac{1}{n}\sum_{k\in \mathbb{Z},|k|\leq n}
 \mid cov(\mid\triangle_{k,1}X\mid^\beta,\mid\triangle_{0,1}X\mid^\beta)\mid=
 \frac{1}{n}\sum_{k\in \mathbb{Z},|k|\leq K}
 \mid cov(\mid\triangle_{k,1}X\mid^\beta,\mid\triangle_{0,1}X\mid^\beta)\mid=\frac{C}{n}
 $$
 where $C$ is a positive constant. We thus get (\ref{eq.9.2}) with $b_n=n^{-1/2}$ and it follows that 
 $\hat{H}_n-H=O_\mathbb{P}(n^{-1/2}),\hat{\alpha}_n-\alpha=O_\mathbb{P}(n^{-1/2})$.\\
 b)  To prove the asymptotic normality for the estimator of $H$, we first prove that for all $n\in\mathbb{N}, n>2K$, 
 \begin{equation*}
\sqrt{n}\left((W_{n},W_{n/2})-
(\mathbb{E}|\triangle_{0,1}X|^\beta,\mathbb{E}|\triangle_{0,1}X|^\beta) \right)
\end{equation*}
converges in distribution to a normal distribution
as $n\rightarrow +\infty$.\\
 Since $\{X_t\}_{t\geq 0}$ is a $H$-sssi process, one has
  \begin{align*}
   &\left( \triangle_{0,n}X,\triangle_{1,n}X,\ldots, \triangle_{n-K,n}X, 
   \triangle_{0,n/2}X,\triangle_{1,n/2}X,\ldots, \triangle_{n/2-K,n/2}X
   \right)\\
  & \stackrel{(d)}{=}
  \frac{1}{(n/2)^H}\left( \triangle_{0,2}X,\triangle_{1,2}X,\ldots, \triangle_{n-K,2}X, 
   \triangle_{0,1}X,\triangle_{1,1}X,\ldots, \triangle_{n/2-K,1}X\right).
  \end{align*}
  Moreover $$\mathbb{E}|\triangle_{k,2}|^{\beta}=\frac{\mathbb{E}|\triangle_{0,1}X|^{\beta}}{2^{\beta H}}, 
\mathbb{E}|\triangle_{k,1}X|^{\beta}=\mathbb{E}|\triangle_{0,1}X|^{\beta}, $$
$$
var|\triangle_{k,2}|^{\beta}=\frac{var|\triangle_{0,1}X|^{\beta}}{2^{2\beta H}}, 
var|\triangle_{k,1}X|^{\beta}=var|\triangle_{0,1}X|^{\beta}.$$
It follows that 
\begin{eqnarray*}
&\sqrt{n}\left(\left(W_{n},W_{n/2}\right)-
\left(\mathbb{E}|\triangle_{0,1}X|^\beta,\mathbb{E}|\triangle_{0,1}X|^\beta \right) \right)\\
&\stackrel{(d}{=} \sqrt{n}\left( \left(\frac{2^{\beta H}}{n-K +1}
\sum\limits_{p=0}^{n-K}|\triangle_{p,2}X)|^{\beta},
\frac{1}{n/2-K +1}
\sum\limits_{p=0}^{n/2-K}|\triangle_{p,1}X)|^{\beta}\right)
-\left(\mathbb{E}|\triangle_{0,1}X|^\beta,\mathbb{E}|\triangle_{0,1}X|^\beta \right)
\right) \\
&=\sqrt{n}\left(\frac{2^{\beta H}}{n-K +1}
\sum\limits_{p=0}^{n-K}\left(\triangle_{p,2}X)|^{\beta}-\mathbb{E}|\frac{\triangle_{0,1}X}{2^H}|^\beta\right),
\frac{1}{n/2-K +1}
\sum\limits_{p=0}^{n/2-K}\left(|\triangle_{p,1}X)|^{\beta}-\mathbb{E}|\triangle_{0,1}X|^\beta\right)
\right).
\end{eqnarray*}
 Now we need to prove that for all $a,b\in\mathbb{R}, ab\neq 0$, 
\begin{align*}
 S_n:=&a\sqrt{n}\left(\frac{2^{\beta H}}{n-K +1}
\sum\limits_{p=0}^{n-K}\left(|\triangle_{p,2}X)|^{\beta}-\mathbb{E}|\frac{\triangle_{0,1}X}{2^H}|^\beta\right)\right)\\
&+
 b\sqrt{n}\left(\frac{1}{n/2-K +1}
\sum\limits_{p=0}^{n/2-K}\left(|\triangle_{p,1}X)|^{\beta}-\mathbb{E}|\triangle_{0,1}X|^\beta\right)
\right)
\end{align*}
converges to a normal distribution when $n\rightarrow +\infty$. Let 
\begin{equation}\label{eq.28.2}
 Z_p:=\frac{2^{\beta H}a}{2}
 \left(|\triangle_{2p,2}X|^{\beta}+ 
 |\triangle_{2p+1,2}X|^{\beta}  \right)
 +b|\triangle_{p,1}X|^{\beta}.
\end{equation}
 It follows that 
 \[
 S_n=\sqrt{\frac{n}{n/2-K+1}}\left(\frac{1}{\sqrt{n/2-K+1}}\sum\limits_{p=0}^{n/2-K}(Z_p-\mathbb{E}Z_p\right)+U_n,
\]
where 
\begin{align*}
U_n=&\frac{2^{\beta H}a\sqrt{n}}{n-K+1}
\left(\frac{1-K}{n-2K+2}\sum\limits_{p=0}^{n-2K+1}
\left(|\triangle_{p,2}X|^{\beta}-\mathbb{E}|\frac{\triangle_{0,1}X}{2^H}|^\beta\right)+
\sum\limits_{p=n-2K+2}^{n-K}
\left(|\triangle_{p,2}X|^{\beta}-\mathbb{E}|\frac{\triangle_{0,1}X}{2^H}|^\beta\right)
\right)\\
=&\frac{2^{\beta H}a\sqrt{n}}{n-K+1}
\left( \frac{1-K}{n-2K+2}\sum\limits_{p=0}^{n-2K+1}Y_p+
\sum\limits_{p=n-2K+2}^{n-K}Y_p\right),
Y_p=|\triangle_{p,2}X|^{\beta}-\mathbb{E}|\frac{\triangle_{0,1}X}{2^H}|^\beta.
\end{align*}
Since $\sum\limits_{k=0}^K a_k=0$, one can write
\begin{align*}
Z_p=&\frac{2^{\beta H}a}{2}
 \left(\left|\sum\limits_{k=0}^Ka_k\left(X(\frac{k+2p}{2})-X(p)\right)\right|^{\beta}
 + \left|\sum\limits_{k=0}^Ka_k\left(X(\frac{k+2p+1}{2})-X(p)\right)\right|^{\beta}
\right)\\
&+b\left|\sum\limits_{k=0}^Ka_k\left(X(k+p)-X(p)\right)\right|^{\beta}.
\end{align*}
  If $p-p'> K-1$, since $X$ has independent increments and
 \begin{align*}
  & 0\leq p'\leq \frac{k+2p'}{2}\leq p \leq \min\{\frac{k+2p}{2},\frac{k+2p+1}{2},k+p\},\\
  & 0\leq p'\leq \frac{k+2p'+1}{2}\leq p \leq \min\{\frac{k+2p}{2},\frac{k+2p+1}{2},k+p\},\\
  & 0\leq p'\leq k+p'\leq p \leq \min\{\frac{k+2p}{2},\frac{k+2p+1}{2},k+p\},
\end{align*}
for all $k=0,\ldots, K$, it follows that
$Z_p,Z_{p'}$ are independent.
 It induces that $\{Z_p\}_{p\in\mathbb{N}}$ is a $(K-1)$-dependent sequence 
of random variables.
For $l\in\mathbb{R}$ fixed, we also have
\begin{align*}
 Z_{p+l}&=\frac{2^{\beta H}a}{2}
 \left(\left|\sum\limits_{k=0}^Ka_k\left(X(\frac{k+2(p+l)}{2})-X(l)\right)\right|^{\beta}
 +\left|\sum\limits_{k=0}^Ka_k\left(X(\frac{k+2(p+l)+1}{2})-X(l)\right)\right|^{\beta}
 \right)\\
 &+b\left|\sum\limits_{k=0}^Ka_k\left(X(k+p+l)-X(l)\right)\right|^{\beta}.
   \end{align*}
    On the other hand, since $X$ has stationary increments and $X(0)=0$ almost surely, we have
   $$
   \left(X(t+l)-X(l) 
   \right)_{t\in\mathbb{R}}\stackrel{(d)}{=}\left(X(t) 
   \right)_{t\in\mathbb{R}}.
   $$
   Then
$
(Z_{p+l}, p\in \mathbb{R})\stackrel{(d)}{=}(Z_p, p\in \mathbb{R})
$
or in another way, $(Z_p, p\in \mathbb{R})$ is stationary.\\
It follows that $\{Z_p\}_{p\in\mathbb{N}}$ is a stationary $(K-1)$-dependent sequence of random variables. From Theorem 2.8.1 in \cite{Lehmann1999}, we get
$$
\sqrt{\frac{n}{n/2-K+1}}\left(\frac{1}{\sqrt{n/2-K+1}}\sum\limits_{p=0}^{n/2-K}(Z_p-\mathbb{E}Z_p)\right)
$$
converges in distribution to a centered normal distribution with variance
\begin{align}\label{eq.28.2.1}
\sigma^2&=2(var Z_0+2\sum\limits_{k=1}^{K-1}cov(Z_0,Z_k))=a^2\sigma_1^2+b^2\sigma_2^2+2ab\sigma_{1,2}
\end{align}
where $\sigma_1^2,\sigma_2^2,\sigma_{1,2}$ are defined as in (\ref{eq.31}), (\ref{eq.32}), (\ref{eq.33}).
We also have $\mathbb{E}Y_p=0,\mathbb{E}U_n=0, Y_p\stackrel{(d)}{=}Y_0$ and $\mathbb{E}Y_p^2=\mathbb{E}Y_0^2
$ for all $p$.
Thus
\begin{align*}
 \mathbb{E}U_n^2&=\frac{2^{2\beta H}a^2n}{(n-K+1)^2}
 \mathbb{E}\left( \frac{1-K}{n-2K+2)}\sum\limits_{p=0}^{n-2K+1}Y_p+
\sum\limits_{p=n-2K+2}^{n-K}Y_p\right)^2\\
&\leq \frac{2^{2\beta H+1}a^2n}{(n-K+1)^2}
\left(\frac{(1-K)^2}{(n-2K+2)^2}\mathbb{E}\left(\sum\limits_{p=0}^{n-2K+1}Y_p\right)^2+
\mathbb{E}\left(\sum\limits_{p=n-2K+2}^{n-K}Y_p\right)^2\right)\\
&\leq \frac{2^{2\beta H+1}a^2n}{(n-K+1)^2}
\left(
\frac{(1-K)^2}{(n-2K+2)^2}(n-2K+2)^2\mathbb{E}Y_0^2+(K-1)^2\mathbb{E}Y_0^2
\right)\\
&=\frac{2^{2\beta H+2}a^2(K-1)^2n}{(n-K+1)^2}\mathbb{E}Y_0^2=\frac{nC}{(n-K+1)^2}.
\end{align*}
It follows that $\mathbb{E}U_n^2$ converges to $0$ as $n\rightarrow +\infty$. 
Moreover $\mathbb{E}U_n=0$, using Chebyshev's inequality, we obtain that $U_n\xrightarrow{(\mathbb{P})}0$ as $n\rightarrow +\infty$.\\
Following Slutsky's theorem, as
$n\rightarrow +\infty$, $S_n$ converges in distribution to a centered normal distribution with variance $\sigma$ as in (\ref{eq.28.2}). \\
We deduce that 
$\sqrt{n}\left((W_{n},W_{n/2})-
(\mathbb{E}|\triangle_{0,1}X|^\beta,\mathbb{E}|\triangle_{0,1}X|^\beta) \right)\stackrel{(d)}{\rightarrow}\mathcal{N}_2(0,\varGamma_3)$
where $\varGamma_3$ is defined by (\ref{eq.34}).
Since
 \begin{align}\label{eq.28.4}
  \sqrt{n}(\hat{H}_{n}-H)&=\sqrt{n}\frac{1}{\beta}\log_2\frac{W_{n/2}}{W_{n}}=\sqrt{n}\left(\phi(W_{n}(\beta),W_{n/2}(\beta))-
  \phi(\mathbb{E}|\triangle_{0,1}X|^\beta,\mathbb{E}|\triangle_{0,1}X|^\beta)\right)
 \end{align}
  where $\phi$ is defined by (\ref{eq.11.3.1}). Applying Theorem 3.1 of \cite{Dacunha1983}, we get $\sqrt{n}(\hat{H}-H)\stackrel{(d)}{\rightarrow}\mathcal{N}_1(0,\Xi_2)$ with $\Xi_2$ defined by (\ref{eq.35}).\\
  We now prove the central limit theorem for the estimation of $\alpha$ in the case of $S\alpha S-$stable L\'evy motion.\\
 We need to prove that for all $n\in\mathbb{N}, n>K$, then $\sqrt{n}\left(( W_n(\beta_1),W_n(\beta_2))-(\mathbb{E}|\triangle_{0,1}X|^{\beta_1},\mathbb{E}|\triangle_{0,1}X|^{\beta_2})\right)$ converges in distribution to a normal distribution as $n\rightarrow +\infty$.\\
We consider
$$S_n=a\sqrt{n}\left(W_n(\beta_1)-\mathbb{E}|\triangle_{0,1}X|^{\beta_1}\right)+b\sqrt{n}\left(W_n(\beta_2)-\mathbb{E}|\triangle_{0,1}X|^{\beta_2}\right)$$
for all $a,b\in\mathbb{R},ab\neq 0$. Since $\{X_t,t\in\mathbb{R}\}$ is a $H$ self-similar process, we have
\begin{align*}
S_n&\stackrel{(d)}{=}\frac{\sqrt{n}}{n-K+1}\sum\limits_{k=0}^{n-K}\left( a(|\triangle_{k,1}X|^{\beta_1}-\mathbb{E}|\triangle_{k,1}X|^{\beta_1})+ b(|\triangle_{k,1}X|^{\beta_2}-\mathbb{E}|\triangle_{k,1}X|^{\beta_2})\right).\\
&=\frac{\sqrt{n}}{n-K+1}\sum\limits_{k=0}^{n-K}(Z_k-\mathbb{E}Z_k)
\end{align*}
where
\begin{equation}\label{eq.28.4.1}
Z_k=a|\triangle_{k,1}X|^{\beta_1}+b|\triangle_{k,1}X|^{\beta_2}.
\end{equation} 
Since $\{X_t,t\in\mathbb{R}\}$ has stationary increments, $\{Z_k,k\in\mathbb{N}\} $ is stationary.\\
 Moreover, if $k-k'> K-1$, since $\{X_t,t\in\mathbb{R}\}$ has independent increments, then $Z_k,Z_{k'}$ are independent. We obtain that $\{Z_k,k\in\mathbb{N}\} $ is a stationary $(K-1)$-dependent sequence of random variables. Then applying Theorem 2.8.1 of \cite{Lehmann1999}, as $n\rightarrow +\infty$, $S_n$ converges to a centered normal distribution with variance:
 \begin{equation}\label{eq.28.5}
 \sigma^2=var Z_0+2\sum\limits_{k=0}^{K-1}cov(Z_0,Z_k).
 \end{equation}
  We can write $\sigma^2$ in details
 \begin{align*}
\sigma^2&=a^2 \left(var|\triangle_{0,1}X|^{\beta_1}+2\sum\limits_{k=1}^{K-1}cov(|\triangle_{0,1}X|^{\beta_1},|\triangle_{k,1}X|^{\beta_1})\right)\\
&+b^2 \left(var|\triangle_{0,1}X|^{\beta_2}+2\sum\limits_{k=1}^{K-1}cov(|\triangle_{0,1}X|^{\beta_2},|\triangle_{k,1}X|^{\beta_2})\right)+2ab\\
&\times\left[cov(|\triangle_{0,1}X|^{\beta_1},|\triangle_{0,1}X|^{\beta_2})+\frac{1}{2}\sum\limits_{k=1}^{K-1} \left( cov(|\triangle_{0,1}X|^{\beta_1},|\triangle_{k,1}X|^{\beta_2})+cov(|\triangle_{0,1}X|^{\beta_2},|\triangle_{k,1}X|^{\beta_1}) \right) \right]\\
&=a^2\sigma_1^2+b^2\sigma_2^2+2ab\sigma_{1,2},
\end{align*}
where $\sigma_1^2,\sigma_2^2,\sigma_{1,2}$ are defined by (\ref{eq.27.0}), (\ref{eq.27}), (\ref{eq.28.1}) respectively. \\
It follows that 
$\sqrt{n}\left(( W_n(\beta_1),W_n(\beta_2))-(\mathbb{E}|\triangle_{0,1}X|^{\beta_1},\mathbb{E}|\triangle_{0,1}X|^{\beta_2})\right)\stackrel{(d)}{\rightarrow} \mathcal{N}_2(0,\Gamma_4)$, where $\varGamma_4$ defined by (\ref{eq.30}).\\
  The function
$\varphi_{-\beta_1,-\beta_2}\circ \psi_{-\beta_1,-\beta_2}: \mathbb{R}^{+}\times\mathbb{R}^{+}\rightarrow[0,+\infty)$ 
is differentiable at 
$$(x_1,y_1)=(\mathbb{E}|\triangle_{0,1}X|^{\beta_1},\mathbb{E}|\triangle_{0,1}X|^{\beta_2}),$$ where $\psi_{u,v}, \varphi_{u,v}$ are defined by (\ref{eq.19.1}), (\ref{eq.20.1}) respectively. Then we apply Theorem 3.1 of \cite{Dacunha1983} to get the conclusion. 
\end{proof}
\begin{proof}[\bf{Proof of Theorem \ref{thm.8}}]
Set 
$
f(t)=\sum_{k=0}^K a_k|k-t|^{H-1/\alpha}.
$
For all $k\in\mathbb{Z}$, one has 
\begin{align*}
\triangle_{k,1}X&
=\int\limits_{\mathbb{R}}\sum_{j=0}^Ka_j(\mid k+j-s\mid^{H-1/\alpha}
-\mid s\mid^{H-1/\alpha})M(ds)
=\int\limits_{\mathbb{R}}f(s-k)M(ds)
\end{align*}
and
$
\mid\mid\triangle_{k,1}X\mid\mid_\alpha^\alpha=\int\limits_{\mathbb{R}}\mid f(s-k)\mid^\alpha ds.
$
By taking the change of variable $u=s-k$, we get
\begin{align*}
 ||\triangle_{k,1}X||_\alpha^\alpha &
 =\int\limits_{\mathbb{R}}| f(u)|^\alpha du
 =||\triangle_{0,1}X||_\alpha^\alpha.
\end{align*}
Let 
$
U_k=\frac{\triangle_{k,1}X}{|| \triangle_{k,1}X||_\alpha}
$,
then
$
\mid\mid U_k\mid\mid_\alpha^\alpha=1
$
and
$
U_k=\int\limits_{\mathbb{R}}\frac{f(s-k)}{\mid\mid \triangle_{k,1}X\mid\mid_\alpha}M(ds).
 $
 We now prove that the assumption (\ref{eq.9.2}) is satisfied.
 Therefore, we consider 
\begin{align}\label{eq.50}
S_n=\frac{1}{n}\sum_{k\in \mathbb{Z},|k|\leq n}\mid cov(|\triangle_{k,1}X|^\beta,|\triangle_{0,1}X|^\beta)|.
\end{align}
Since $||\triangle_{k,1}X||_\alpha=||\triangle_{0,1}X||_\alpha$, it follows that
\[
 \sum_{k\in \mathbb{Z}, |k|\leq n}| cov(|\triangle_{k,1}X|^\beta,|\triangle_{0,1}X |^\beta)| =||\triangle_{0,1}X||_\alpha^{2\beta}
 \sum_{k\in \mathbb{Z},|k|\leq n}|cov(| U_k|^\beta,|U_0|^\beta)|.
\]
Moreover
\begin{align*}
[U_k,U_0]_2=\int\limits_{\mathbb{R}}\left|\frac{f(s-k)f(s)}{||\triangle_{0,1}X||_\alpha^2}\right|^{\alpha/2} ds
\end{align*}
Together with Lemma 3.6 in \cite{Jacques2012}, there exist $k_0>4K$ and $0<\eta<1$ such that for all $k\in\mathbb{Z}, |k|> k_0$,one has 
\begin{align*}
[U_k,U_0]_2\leq \eta<1.
\end{align*} 
Applying Theorem \ref{thm.2}, there exists $C(\eta)>0$ depending on $\eta$ such that
\[
|cov(| U_k|^\beta,| U_0|^\beta)|\leq C(\eta)\int\limits_{\mathbb{R}}| f(s-k)f(s)|^{\alpha/2}ds
\]
for all $|k|>k_0$. Then for $n>k_0$, one obtains that 
\begin{align*}
\sum_{k\in \mathbb{Z},|k|\leq n}|cov(| U_k|^\beta,|U_0|^\beta)|&=
\sum_{k\in \mathbb{Z}, |k| \leq k_0}|cov(| U_k|^\beta,| U_0|^\beta)|+
\sum_{k\in \mathbb{Z}, k_0<| k| \leq n}| cov(| U_k|^\beta,| U_0|^\beta)|\\
&\leq C\sum_{k\in \mathbb{Z}, | k| \leq k_0}\int\limits_{\mathbb{R}}|f(s-k)f(s)|^{\alpha/2}ds +
C \sum_{k\in \mathbb{Z}, k_0<|k| \leq n}|k|^{\frac{\alpha H-(L+1)\alpha}{2}}.
\end{align*}
Because $f(x)\in L^\alpha(\mathbb {R},dx)$, one has
$
\sum\limits_{k\in \mathbb{Z}, |k|\leq k_0} \int\limits_{\mathbb{R}}\mid f(s-k)f(s)\mid^{\alpha/2}ds<+\infty
$.\\ 
Then 
$
S_n=\frac{C}{n} \sum\limits_{k\in \mathbb{Z}, k_0<|k| \leq n}|k|^{\frac{\alpha H-(L+1)\alpha}{2}}.
$\\
 Since $\alpha H-(L+1)\alpha <0$, using Lemma \ref{lem.B.1} in Appendix, we also get
$$
S_n=
\begin{cases}
 O(n^{-1})&\mbox{ if } H<L+1-\frac{2}{\alpha} \\ 
 O(n^{\frac{\alpha H-(L+1)\alpha}{2}})&\mbox{ if } H>L+1-\frac{2}{\alpha} \\ 
 O(\frac{\ln n}{n})&\mbox{ if } H=L+1-\frac{2}{\alpha}
\end{cases},
$$
where $S_n$ is defined by (\ref{eq.50}). Applying Theorem \ref{thm.5}, we have
$$
W_n(\beta)-\mathbb{E}|\triangle_{0,1}X|^{\beta}=
 O_{\mathbb P}(b_n), \widehat{H}_n -H=O_{\mathbb P}(b_n),
$$
where $b_n$ is defined by(\ref{eq.13}).
\end{proof}
\begin{proof}[\bf{Proof of Theorem \ref{thm.9}}]
We have
\begin{align*}
\triangle_{k,1}X & =\int\limits_0^{+\infty}\int\limits_{\mathbb{R}}
\left(\sum_{i=0}^Ka_i\mathbbm{1}_{S_{k+i}}(x,r)\right) M(dx,dr)\\
||\triangle_{k,1}X||_\alpha^\alpha & =\int\limits_0^{+\infty}\int\limits_{\mathbb{R}} 
|\sum_{i=0}^K a_i\mathbbm{1}_{S_{k+i}}(x,r)|^\alpha(r^{\nu-2})^\alpha dxdr\\
&=\int\limits_0^{+\infty}\int\limits_{\mathbb{R}} 
|\sum_{i=0}^K a_i\mathbbm{1}_{S_{i}}(x-k,r)|^\alpha(r^{\nu-2})^\alpha d(x-k)dr.
\end{align*}
By taking the change of variable $u=x-k$, one obtains that 
$$
||\triangle_{k,1}X||_\alpha^\alpha =\int\limits_0^{+\infty}\int\limits_{\mathbb{R}} 
|\sum_{i=0}^K a_i\mathbbm{1}_{S_{i}}(u,r)|^\alpha(r^{\nu-2})^\alpha dudr
=||\triangle_{0,1}X||_\alpha^\alpha.
$$
Set  
$
U_k=\frac{\triangle_{k,1}X}{||\triangle_{k,1}X||_\alpha}=\frac{\triangle_{k,1}X}{||\triangle_{0,1}X||_\alpha}.
$ Obviously, $||U_k||_\alpha^\alpha=1$.
We now prove that the condition (\ref{eq.9.2}) is satisfied.
Set 
$$
I_n=\sum_{k\in\mathbb{Z},|k|\leq n}
|cov(|\triangle_{k,1}X|^\beta,|\triangle_{0,1}X|^\beta)|=
||\triangle_{0,1}X||_\alpha^{\beta}\sum_{k\in\mathbb{Z},|k|\leq n}
|cov(|U_k|^\beta,|U_0|^\beta)|.
$$
For $n>2K$, applying Lemma \ref{lem.A.2} in Appendix, one gets
\begin{align*}
 I_n &\leq C\left(
 \sum_{k\in\mathbb{Z},|k|\leq 2K}
|cov(|U_k|^\beta,|U_0|^\beta)|+\sum_{k\in\mathbb{Z},2K<|k|\leq n}
|cov(|U_k|^\beta,|U_0|^\beta)|
 \right)\\
 &\leq C\left(
 \sum_{k\in\mathbb{Z},|k|\leq 2K}|cov(|U_k|^\beta,|U_0|^\beta)|+
 \sum_{k\in\mathbb{Z},2K<|k|\leq n}|k|^{\nu-1}\right).\\
\end{align*}
Since $0<\nu<1$, one gets $-1<\nu-1<0$. Following Lemma \ref{lem.B.1} in Appendix, one obtains 
$$
\frac{1}{n}\sum_{k\in\mathbb{Z},2K<|k|\leq n}|k|^{\nu-1}=O(n^{\nu-1}).
$$
Then we get the condition (\ref{eq.9.2}).
Applying Theorem \ref{thm.5}, we obtain that  
$
W_n(X)-\mathbb{E}|\triangle_{0,1}X|^{\beta}=O_{\mathbb{P}}(b_n)
$
and 
$$
\widehat{H}_n-H=O_{\mathbb{P}}(b_n), \hat{\alpha}_n-\alpha=O_{\mathbb{P}}(b_n)
$$
where $b_n$ is defined as in (\ref{eq.16}).
\end{proof}
\section{Appendix}\label{appendix}
We present here some technical results related to examples introduced in Section \ref{section.3}.
\subsection{Auxiliary results related to Fractional Brownian motion}
We are in position to provide and prove some technical results related to fractional Brownian motion. These results are used to present the variances for the limit distributions of the central limit theorems for the estimators of $H$ and $\alpha$ and to prove Theorem \ref{thm.6} in Subsection \ref{subsection.4.3}. 
\begin{prop}\label{prop.A.1}
Let $X$ be a $H$ fractional Brownian motion with $H\in (0,1)$. For $\beta\in\mathbb{R}, -1/2<\beta<0$, let $f_\beta$ be defined as in (\ref{eq.11.0}),
$$f_\beta=\sqrt{var \triangle_{0,1}X}^{\beta}(|x|^\beta-\mathbb{E}|Z_0|^\beta)$$
where $Z_0=\frac{\triangle_{0,1}X}{\sqrt{var \triangle_{0,1}X}}$. Then $f_\beta$ can be expanded in a unique way into series of Hermite polynomials
$$f_\beta(x)=\sum_{q \geq d}f_{\beta,q}H_q(x)$$
and $\sum\limits_{q\geq d}q!f^2_{\beta,q}<+\infty$, 
where $d$ is the Hermite rank of $f_\beta$, moreover $d\geq 2$.
\end{prop}
\begin{proof}
Since $-1/2<\beta<0$, one has
$\frac{1}{\sqrt{2\pi}}\int\limits_{\mathbb{R}}f_\beta(x)e^{-x^2/2}dx=0$ and 
$
 \frac{1}{\sqrt{2\pi}}\int\limits_{\mathbb{R}}f_\beta^2(x)e^{-x^2/2}dx<+\infty.
$
  Then following Proposition 1.4.2-(iv) in  \cite{Nourdin2012}, 
we can write $f_\beta$ in terms of Hermite polynomials in a unique way 
\begin{align*}
f_\beta(x)=\sum_{q \geq d}f_{\beta,q}H_q(x),
\end{align*}
where $d\geq 1$ is the Hermite rank of $f_\beta$ and $H_q$s are the Hermite polynomials. \\
Moreover, it is clear that $Z_0\sim \mathcal{N}(0,1)$. From Proposition 2.2.1 in \cite{Nourdin2012}, we get 
$$
\mathbb{E}[H_p(Z_0)H_q(Z_0)]=
\begin{cases}
0& \textit{if } p\neq q\\
p!& \textit{if } p=q
\end{cases}
$$
 Then since $H_1(x)=x$, one has 
 $\mathbb{E}H_1(Z_0)f_\beta (Z_0)=\mathbb{E}Z_0f_\beta (Z_0)=f_{\beta,1}\mathbb{E}Z_0^2=f_{\beta,1}$. 
Combining with the fact that
\begin{align*}
\mathbb{E}H_1(Z_0)f_\beta (Z_0)&=\frac{(var \triangle_{0,1}X)^{\beta}}{\sqrt{2\pi}}\int\limits_{\mathbb{R}}
  x(|x|^\beta-\mathbb{E}|Z_0|^\beta)e^{-x^2/2}dx=0,
\end{align*}
we deduce that $f_{\beta,1}=0$. It follows that $d\geq 2$.\\
Moreover, 
\begin{align*}
\mathbb{E}f_\beta^2(Z_0)&=\frac{1}{\sqrt{2\pi}}\int\limits_{\mathbb{R}}f_\beta^2(x)e^{-x^2/2}dx<+\infty.
\end{align*}
On the other hand,  
\begin{align*}
 \mathbb{E}f_\beta^2(Z_0)&=\sum\limits_{p,q\geq d}f_{\beta,p}f_{\beta,q}\mathbb{E}[H_p(Z_0)H_q(Z_0)]=\sum\limits_{q\geq d}q!f^2_{\beta,q}. 
\end{align*}
It follows that 
$\sum\limits_{q\geq d}q!f^2_{\beta,q}<+\infty$.
 \end{proof}
 \begin{lem}\label{lem.10}
 Let $(U,V)\stackrel{(d)}{=}\mathcal{N}_2\left((0,0),
 \begin{pmatrix}
                            1 & \rho\\
                            \rho & 1 
 \end{pmatrix}\right),\mid \rho\mid\leq 1$. Then for each $\beta\in\mathbb{C}, Re(\beta)\in(-1/2,0)$,  there exists a constant $C>0$ 
 such that 
  $\forall \mid\rho\mid\leq 1$, we have:
 $$\mid cov(\mid U\mid^\beta,\mid V \mid^\beta)\mid \leq C\rho^2$$
 \end{lem}
 \begin{proof}
 Let $\Sigma=\begin{pmatrix}
            1 & \rho\\
            \rho & 1
           \end{pmatrix}
$ . We have $det(\Sigma)=1-\rho^2$ and $\Sigma^{-1}=\frac{1}{1-\rho^2}\begin{pmatrix}
            1 & \rho\\
            \rho & 1
           \end{pmatrix}
$.
The density function of $(U,V)$:
\begin{align*}
 f(x,y)&=\mid 2\pi\Sigma\mid^{-1/2} exp\left[-\frac{1}{2}(x  y)\Sigma^{-1}\begin{pmatrix}
                                                                      x\\
                                                                      y
                                                                     \end{pmatrix}
\right]
=\left((2\pi)^2 det\Sigma\right)^{-1/2} exp\left[-\frac{1}{2(1-\rho^2)}(x^2+y^2-2\rho xy)\right].
\end{align*}
We get
\begin{align*}
 \mathbb{E}\left(\mid U\mid^\beta\mid V\mid^{\overline{\beta}}\right)&=\frac{1}{2\pi\sqrt{1-\rho^2}}
 \int\limits_{\mathbb{R}^2}\mid x\mid^\beta\mid y\mid^{\overline{\beta}}exp\left[-\frac{1}{2(1-\rho^2)}(x^2+y^2-2\rho xy)\right]dxdy\\
 \mathbb{E}\mid U\mid^\beta \mathbb{E}\mid V\mid^{\overline{\beta}}&=\frac{1}{2\pi}\int\limits_{\mathbb{R}^2}\mid x\mid^\beta
 \mid y\mid^{\overline{\beta}}exp(-\frac{x^2+y^2}{2})dxdy
\end{align*}
and
\begin{align*}
 cov(\mid U \mid^{\beta}, \mid V\mid^\beta)&=\mathbb{E}(\mid U\mid^\beta \mid V\mid^{\overline{\beta}})
 -\mathbb{E}\mid U\mid^\beta \mathbb{E}\mid V\mid^{\overline{\beta}}\\
 &=\frac{1}{2\pi}\int\limits_{\mathbb{R}^2}\mid x\mid^\beta\mid y\mid^{\overline{\beta}}exp(-\frac{x^2+y^2}{2}) A_{\rho}(x,y)dxdy
\end{align*}
where
$$
A_\rho(x,y)=\frac{1}{\sqrt{1-\rho^2}} exp\left( -\frac{\rho^2}{1-\rho^2}(x^2+y^2)\right) 
exp\left(\frac{\rho xy}{1-\rho^2}\right)-1.
$$
Since $\int\limits_{\mathbb{R}}\mid x\mid^\beta x e^{-x^2/2} dx=0$ we obtain that
$$
cov(\mid U \mid^{\beta}, \mid V\mid^\beta)=\frac{1}{2\pi}\int\limits_{\mathbb{R}^2}
\mid x\mid^\beta\mid y\mid^{\overline{\beta}}exp(-\frac{x^2+y^2}{2}) B_{\rho}(x,y)dxdy
$$
with
\begin{align*}
  B_{\rho}(x,y)&=A_\rho(x,y)-\rho xy=\frac{1}{\sqrt{1-\rho^2}}
 exp\left(-\frac{\rho^2}{1-\rho^2}(x^2+y^2)\right)
  exp\left(\frac{\rho xy}{1-\rho^2}\right)-1-\rho xy.
 \end{align*}
 Using L'Hôpital rule, we get:
 \begin{align*}
  \lim_{\rho\to 0}\frac{B_{\rho}}{\rho^2}&=\lim_{\rho\to 0}\frac{B_{\rho}^{'}}{2\rho}\\
  B_\rho^{'}&=A_\rho^{'}(x,y)-xy\\
  &=\left[\left( \rho (1-\rho^2)^\frac{-3}{2}-(x^2+y^2)[2\rho(1-\rho^2)^{-1}+2\rho^3(1-\rho^2)^{-2}]\right)
  +xy\left( (1-\rho^2)^{-1}+2\rho^2(1-\rho^2)^{-2}  \right)\right]\\
  &\times exp\left(-\frac{\rho^2}{1-\rho^2}(x^2+y^2)\right)
  exp\left(\frac{\rho xy}{1-\rho^2}\right)-xy \\
\lim_{\rho\to 0}\frac{B_{\rho}^{'}}{2\rho}&=xy\cdot\lim_{\rho\to 0}
\frac{\left[ exp\left(\frac{-\rho^2}{1-\rho^2}(x^2+y^2)+
\frac{\rho xy}{1-\rho^2}\right)(1-\rho^2)^{-1}-1\right]}{2\rho}+\frac{1}{2}-(x^2+y^2):=A+\frac{1}{2}-(x^2+y^2).
\end{align*}
Then we continue using L'Hôpital rule for the remaining limit:
\begin{align*}
A=&\frac{xy}{2}\lim_{\rho\to 0}exp\left(\frac{-\rho^2}{1-\rho^2}(x^2+y^2)+
\frac{\rho xy}{1-\rho^2}\right) \times\\
&\left[ \frac{2\rho}{(1-\rho^2)^2}-(x^2+y^2)\left(\frac{2\rho}{1-\rho^2}
+\frac{2\rho^3}{(1-\rho^2)^2}\right)+xy\left( \frac{1}{1-\rho^2}+\frac{2\rho^2}{(1-\rho)^2}\right)\right]\\
=&\frac{x^2y^2}{2}.
\end{align*}
One has
\begin{align*}
\lim_{\rho\to 0}\frac{B_\rho}{\rho^2}&=\frac{x^2y^2+1}{2}-(x^2+y^2)\\
\frac{\partial^2B_\rho(x,y)}{\partial\rho^2}&=P_\rho(x,y)exp\left(-\frac{\rho^2}{1-\rho^2}(x^2+y^2)\right)
  exp\left(\frac{\rho xy}{1-\rho^2}\right).
\end{align*}
 where $P_\rho(x,y)$ is a fourth degree polynomial that depends continuously on $\rho$. We also have 
 $$B_0(x,y)=0, B_\rho^{'}(x,y)\mid_{\rho=0}=0.$$
 A Taylor expansion up to order 2 leads to
 $$
 B_\rho(x,y)=\rho^2P_{\tilde{\rho}}(x,y)exp\left(-\frac{{\tilde{\rho}}^2}{1-{\tilde{\rho}}^2}(x^2+y^2)\right)
  exp\left(\frac{\tilde{\rho} xy}{1-{\tilde{\rho}}^2}\right)
 $$
 with $\tilde{\rho}\in(0,\rho)$.
 On the compact set $\mid \rho\mid\leq 1/2$, the polynomial $P_\rho(x,y)$ can be bounded by a fourth degree polynomial
 $P(x,y)$, for all $x,y\in\mathbb{R}, \mid P_\rho(x,y)\mid\leq\mid P(\mid x\mid, \mid y\mid)\mid$. Moreover
  $$
 exp\left(-\frac{{\tilde{\rho}}^2}{1-{\tilde{\rho}}^2}(x^2+y^2)\right)
  exp\left(\frac{\tilde{\rho} xy}{1-{\tilde{\rho}}^2}\right)\leq 
  exp\left(\frac{\tilde{\rho} xy}{1-{\tilde{\rho}}^2}\right).
 $$
 But with $\mid \tilde{\rho}\mid\leq 1/2$, we get $\mid \frac{\tilde{\rho}}{1-{\tilde{\rho}}^2}\mid\leq 2/3$. So
 $ exp\left(\frac{\tilde{\rho} xy}{1-{\tilde{\rho}}^2}\right)\leq exp \left(\frac{2\mid x y\mid}{3}\right)$. Thus
 $$\mid B_\rho (x,y)\mid\leq \rho^2\mid P(\mid x\mid,\mid y\mid)\mid exp \left(\frac{2\mid x y\mid}{3}\right).$$
 Because the power function grows faster than the polynomial function, we have
 $$
 \int\limits_{\mathbb{R}^2}\mid xy\mid ^{Re(\beta)}exp(-\frac{x^2+y^2}{2})P(\mid x\mid,\mid y\mid)\mid exp \left(\frac{2\mid x y\mid}{3}\right)dxdy<\infty
 .$$
 So we have the conclusion.
  \end{proof}
 \begin{lem}\label{lem.11}
  Let $X$ be a fractional Brownian motion, $\beta\in\mathbb{C},Re(\beta)\in(-1/2,0)$. Then
  $$\sum_{k\in \mathbb{Z}}\mid cov(\mid \triangle_{k,1}X\mid^\beta,\mid \triangle_{0,1}X\mid^\beta)\mid <+\infty.$$
\end{lem}
\begin{proof}
We have
 \begin{align*}
 cov(\triangle_{k,1}X,\triangle_{0,1}X)&=-\frac{1}{2}\sum_{p,p'=0}^{K}a_pa_{p'}\mid k+p-p'\mid^{2H}=-\frac{k^{2H}}{2}\sum_{p,p'=0}^Ka_pa_{p'}\mid 1+\frac{p-p'}{k}\mid^{2H}.
 \end{align*}
 We just need to consider $cov(\triangle_{k,1}X,\triangle_{0,1}X)$ when $\mid k\mid\geq K$. Since 
$ 1+\frac{p-p'}{k}\geq 0$, we get
 $$cov(\triangle_{k,1}X,\triangle_{0,1}X)=-\frac{k^{2H}}{2}\sum_{p,p'=0}^Ka_pa_{p'}(1+\frac{p-p'}{k})^{2H}.$$
 Set
 $$g(x)=-\frac{1}{2}\sum_{p,p'=0}^Ka_pa_{p'}(1+(p-p')x)^{2H}.$$
 If $H=\frac{1}{2}$ then $g(x)=0$.
 If $H\neq\frac{1}{2}$, using Taylor expansion as $x\rightarrow 0$, we get
 \begin{align*}
  g'(x)&=-H\sum_{p,p'=0}^Ka_pa_{p'}(p-p')(1+(p-p')x)^{2H-1}\\
  g''(x)&=-H(2H-1)\sum_{p,p'=0}^Ka_pa_{p'}(p-p')^2(1+(p-p')x)^{2H-2}\\
  g^{(3)}(x)&=-H(2H-1)(2H-2)\sum_{p,p'=0}^Ka_pa_{p'}(p-p')^3(1+(p-p')x)^{2H-3}.
 \end{align*}
 Thus
 $g(0)=0, g'(0)=0,g''(0)=0,g^{(3)}(0)=0$ and we obtain that $g(x)=o(x^3)$ as $x\rightarrow 0$. It follows that
 $$cov(\triangle_{k,1}X,\triangle_{0,1}X)\sim k^{2H}\cdot o(\frac{1}{k^3})\sim o(k^{2H-3})$$
 as $k\rightarrow +\infty$.
 We can apply similarly as $k\rightarrow -\infty$. Then there exists a constant $C$ such that for all
 $ k, \mid k\mid \geq K$ and for all $H\in (0,1),$
 \begin{equation}\label{eq.12}
 \mid cov(\triangle_{k,1}X,\triangle_{0,1}X)\mid \leq C \mid k\mid^{2H-3}.
 \end{equation}
For all $k\in \mathbb {Z}$, we have
\begin{align*}
 var \triangle_{k,1}X&=\mathbb {E}\left[\sum_{p=0}^Ka_pX(k+p)\sum_{p'=0}^Ka_{p'}X(k+p')\right]
 =-\frac{1}{2}\sum_{p,p'=0}^Ka_pa_{p'}\mid p-p'\mid^{2H}.
\end{align*}
We now apply the Lemma \ref{lem.10} with $U=\frac{\triangle_{k,1}X}{\sqrt{var(\triangle_{k,1}X)}}, 
V=\frac{\triangle_{0,1}X}{\sqrt{var(\triangle_{0,1}X)}}$. Then 
\begin{align*}
 \left|cov\left(\mid \frac{\triangle_{k,1}X}{\sqrt{var(\triangle_{k,1}X)})}\mid^\beta,
 \mid \frac{\triangle_{0,1}X}{\sqrt{var(\triangle_{0,1}X)})}\mid^\beta\right)\right|
 \leq C\cdot \frac{{cov}^2(\triangle_{k,1}X,\triangle_{0,1}X)}{{var}^2 \triangle_{0,1}X}.
\end{align*}
It follows that
$$\left| cov(\mid \triangle_{k,1}X\mid^\beta,\mid \triangle_{0,1}X\mid^\beta)\right|
\leq C{cov}^2(\triangle_{k,1}X,\triangle_{0,1}X), \forall k, k\in\mathbb{Z}.$$
Since $H\in (0,1)$, we get $\sum\limits_{k\in \mathbb{Z}}\mid k\mid^{4H-6}<+\infty.$
Applying inequality (\ref{eq.12}), we obtain 
\begin{align*}
\sum_{k\in \mathbb{Z}}\left|  cov(\mid \triangle_{k,1}X\mid^\beta,\mid \triangle_{0,1}X\mid^\beta)\right| 
=&\sum_{k\in \mathbb{Z}, \mid k\mid <K}\left| cov(\mid \triangle_{k,1}X\mid^\beta,\mid \triangle_{0,1}X\mid^\beta)\right|\\
&+
\sum_{k\in \mathbb{Z},\mid k\mid \geq K}\left| cov(\mid \triangle_{k,1}X\mid^\beta,\mid \triangle_{0,1}X\mid^\beta)\right|\\
\leq& C\sum_{k\in \mathbb{Z}, \mid k\mid<K}{cov}^2(\triangle_{k,1}X,\triangle_{0,1}X)
+C\sum_{k\in \mathbb{Z},\mid k\mid \geq K}\mid k\mid^{4H-6}<+\infty.
\end{align*}
\end{proof}
\subsection{Auxiliary results related to Takenaka's process}
\begin{lem}\label{lem.A.2}
 Let $\{X_t, t\in\mathbb{R}\}$ be a Takenaka's process defined by (\ref{eq.15}).
  Then for $\beta \in \mathbb{R}, \beta\in(-1/2,0)$ and $|k|>2K$, we have
$$
\left| cov(\mid \triangle_{k,1}X \mid^{\beta}, \mid \triangle_{0,1}X \mid^{\beta})\right|\leq Ck^{\nu-1}.
$$
\end{lem}
\begin{proof}
One has
$$
\triangle_{k,1}X=\sum_{i=0}^K a_iX(k+i)=
\int\limits_{\mathbb{R}\times\mathbb{R}^+}\sum_{i=0}^Ka_i\mathbbm{1}_{S_{k+i}}(x,r)M(dx,dr)
$$
where
$
f_k=\sum\limits_{i=0}^K a_i\mathbbm{1}_{S_{k+i}}=\sum\limits_{i=0}^K a_i(\mathbbm{1}_{C_{k+i}}-\mathbbm{1}_{C_0})^2.
$
From the fact that $\sum\limits_{i=0}^K a_i=0$ and $\mid 1-2\mathbbm{1}_{C_0}\mid=1$, it induces
\begin{align*}
\mid f_k\mid &=\mid 1-2\mathbbm{1}_{C_0}\mid \mid\sum_{i=0}^K a_i\mathbbm{1}_{C_{k+i}}\mid=\mid\sum_{i=0}^K a_i\mathbbm{1}_{C_{k+i}}\mid.
\end{align*}
Therefore we have to estimate, as $|k|\rightarrow +\infty$,
$$
I_k=[\triangle_{k,1}X, \triangle_{0,1}X]_2=\int\limits_0^{+\infty}r^{\nu-2}\int\limits_{\mathbb{R}}\mid f_k(x,r)f_0(x,r)\mid dx dr.
$$
We will find an upper bound for $I_k$ when $|k|\geq 2K$.\\
If $x>K+r$ then $\mathbbm{1}_{C_{i}}(x,r)=0$ for all $i=0,\ldots,K$, thus $f_0(x,r)=0$.
If $x<k-r$, $\mathbbm{1}_{C_{k+i}}(x,r)=0$ for all $i=0,\ldots,K$, it follows that $f_k(x,r)=0$. \\
As a result,  $f_k(x,r)f_0(x,r)=0$ for all $x\in (-\infty,k-r)\cup (K+r,+\infty)$.\\
Let $k>2K$. If
$r<\frac{k-K}{2}\Leftrightarrow k-r>K+r$ then $f_k(x,r)f_0(x,r)=0$ for all $x$. Thus one gets
\begin{align*}
 I_k &=\int\limits_{\mathbb{R}}\left(\int\limits_0^{\frac{k-K}{2}}r^{\nu-2}\mid f_k(x,r)f_0(x,r)\mid dr
 +\int\limits_{\frac{k-K}{2}}^{+\infty}r^{\nu-2}\mid f_k(x,r)f_0(x,r)\mid dr\right) dx\\
 &=\int\limits_{\mathbb{R}}\int\limits_{\frac{k-K}{2}}^{+\infty}r^{\nu-2}\mid f_k(x,r)f_0(x,r)\mid dr dx=\int\limits_{\frac{k-K}{2}}^{+\infty}r^{\nu-2}\int\limits_{\mathbb{R}}\mid f_k(x,r)f_0(x,r)\mid dx dr.
\end{align*}
Here we consider $f_k(x,r)f_0(x,r)$.\\
Since $k>2K$, then $k+K-r\leq k+r$. For $k+K-r\leq x\leq  k+r$, $\mid x-k-i\mid\leq r$
for all $ i=0,\ldots,K$, it follows that $\mathbbm{1}_{C_{k+i}}(x,r)=1$ and $f_k(x,r)=\sum\limits_{i=0}^Ka_i=0$.\\
Therefore $f_k(x,r)f_0(x,r)=0$ if $x\in(-\infty, k-r)\cup (k+K-r, +\infty)$.
We also have
$$
\mid f_k(x,r)\mid=\mid \sum_{i=0}^Ka_i\mathbbm{1}_{C_{k+i}}(x,r)\mid \leq \sum_{i=0}^K\mid a_i\mid
$$
 for all $k\in\mathbb{N}$.
 Thus
\begin{align*}
 I_k&\leq \int\limits_{\frac{k-K}{2}}^{+\infty}r^{\nu-2}\int\limits_{k-r}^{k+K-r}\mid f_k(x,r)f_0(x,r)\mid dxdr\\
 &\leq C\int\limits_{\frac{k-K}{2}}^{+\infty}r^{\nu-2}\int\limits_{k-r}^{k+K-r} dxdr=C(\frac{k-K}{2})^{\nu-1}\leq Ck^{\nu-1}
\end{align*}
since $\frac{k-K}{2}\geq \frac{k}{4}$ and $0<\nu<1$. \\
Let $k<-2K$. If $r<-\frac{k+K}{2}\Leftrightarrow k+K+r<-r$, then for all $i=1,\ldots,K$,
$$
\mathbbm{1}_{C_{k+i}}(x,r)=0, \forall x\in(k+K+r,+\infty),
\mathbbm{1}_{C_i}(x,r)=0, \forall x\in(-\infty,-r).
$$
It follows that
$f_k(x,r)f_0(x,r)=0$, for all $x\in (-\infty,-r)\cup (k+K+r,+\infty)=\mathbb{R}$. Therefore
$$
I_k=\int\limits_{-\frac{k+K}{2}}^{+\infty}r^{\nu-2}\int\limits_{\mathbb{R}}|f_k(x,r)f_0(x,r)|dxdr
$$
For $r>-\frac{k+K}{2}$, $r>K-K/2=K/2$. We have $f_k(x,r)f_0(x,r)=0$ for all $x\in (-\infty,k-r)\cup(k-r+K,k+r) \cup (k+r+K,K+r)\cup (K+r,+\infty)$. 
It induces that  
\begin{align*}
I_k&=\int\limits_{-\frac{k+K}{2}}^{+\infty}r^{\nu-2}\int\limits_{\mathbb{R}}|f_k(x,r)f_0(x,r)|dxdr\\
&=\int\limits_{-\frac{k+K}{2}}^{+\infty}r^{\nu-2}
\left(\int\limits_{k-r}^{k-r+K}|f_k(x,r)f_0(x,r)|dxdr+
\int\limits_{k+r}^{k+r+K}|f_k(x,r)f_0(x,r)|dxdr \right)\\
&\leq C\int\limits_{-\frac{k+K}{2}}^{+\infty}r^{\nu-2}dr\leq C|k|^{\nu-1}.
\end{align*}
Putting together with Theorem \ref{thm.2}, for $|k|>2K$ we obtain that 
$$
\left| cov(\mid \triangle_{k,1}X \mid^{\beta}, \mid \triangle_{0,1}X \mid^{\beta})\right|\leq Ck^{\nu-1}.
$$
\end{proof}
\subsection{Auxiliary results related to rate of convergence}
We present here a lemma used to determine rate of convergence in the proofs of Theorems 3.3 and 3.4.
\begin{lem}\label{lem.B.1}
For $p<0$, let $
S_n=\frac{1}{n}\sum_{|k|\leq n}|k|^p
$, then 
$
\lim\limits_{n\rightarrow +\infty}S_n=0.
$ Moreover
$$
S_n=
\begin{cases} 
O(n^{-1}) &\mbox{ if } p<-1\\
O(n^{p}) &\mbox{ if } -1<p<0\\
 O(\frac{\ln n}{n}) &\mbox{ if } p=-1.
\end{cases}
$$
\end{lem}
\begin{proof}
Set
\[
S_n=\frac{1}{n}\sum_{k=1}^nk^p.
\]
If $p<-1$, since 
$
\int\limits_{1}^\infty x^pdx<+\infty,$
 following the integral test for convergence, we get 
 $
 \sum\limits_{k=1}^{\infty} k^p<+\infty.
 $ Then 
 $$
 S_n=O(n^{-1}).
$$
If $-1<p< 0$, we take a constant $\epsilon $ such that $0<\epsilon<-p$, then
\begin{align*}
S_n&=\frac{1}{n}\sum\limits_{k=1}^n\frac{k^{1+\epsilon+p}}{k^{1+\epsilon}}
\leq \frac{1}{n}\sum\limits_{k=1}^n \frac{n^{1+\epsilon+p}}{k^{1+\epsilon}}
=n^{p+\epsilon}\sum\limits_{k=1}^n\frac{1}{k^{1+\epsilon}}.
\end{align*}
 Since $p+\epsilon<0$, we get
$\sum\limits_{k=1}^n\frac{1}{k^{1+\epsilon}}<+\infty $. Then 
$S_n=O(n^{p+\epsilon})$
for all $0<\epsilon<-p$. Thus 
$S_n=O(n^{p})$. \\
If $p=-1$, then 
$
S_n =\frac{1}{n}\sum\limits_{k=1}^n\frac{1}{k}=O(\frac{\ln n}{n})
$.\\
In all cases, we have
$\lim\limits_{n\rightarrow +\infty} S_n=0$.
\end{proof}
\section*{Acknowledgements}
The authors would like to thank Michel Zinsmeister for his helpful suggestions. 

 \bibliographystyle{plain}
\bibliography{references}
\end{document}